\newtheorem{theorem}{Theorem}[section]
\newtheorem{lemma}[theorem]{Lemma}
\newtheorem{proposition}[theorem]{Proposition}
\newtheorem{corollary}[theorem]{Corollary}
\theoremstyle{definition}
\theoremstyle{remark}
\newcommand{\what}{\widehat}
\newcommand{\lgra}{\longrightarrow}
\newcommand{\wtilde}{\widetilde}
\newcommand{\R}{\mathbb R}%
\newcommand{\C}{\mathbb C}%
\newcommand{\PP}{\mathcal P}%
\numberwithin{equation}{section}
\begin{document}
\title[Growth of Fourier transform on Damek-Ricci spaces]{A note on growth of Fourier transforms and Moduli of continuity on Damek Ricci spaces}

%

\author[S. K. Ray]{Swagato K. Ray }
\address{Department of Mathematics and Statistics, Indian Institute of Technology, Kanpur 208016, India}
\email{skray@iitk.ac.in}
\thanks{This work is supported by a research
grant no. 48/1/2006-R\&DII/1488 of National Board for Higher Mathematics, India.}

\author[R. P. Sarkar]{Rudra P. Sarkar}
\address{Stat-Math Unit, Indian Statistical
Institute, 203 B. T. Rd., Calcutta 700108, India}
\email{rudra@isical.ac.in}

\subjclass[2000]{Primary 43A85; Secondary 22E30}
\keywords{harmonic $NA$ groups, moduli of continuity, spherical mean}


\begin{abstract} We obtain results related to  boundedness of the growth of Fourier transform by the modulus of continuity on Damek-Ricci spaces. For noncompact riemannian symmetric spaces of rank one, analogues of all  the results follow the same way. 
\end{abstract}

\maketitle

\section{Introduction}
This article is motivated by a recent paper of Bray and Pinsky (\cite{BP}) on growth properties of Fourier transform on euclidean spaces and their analogues on noncompact riemannian symmetric spaces of rank one.
The two main results for symmetric spaces $X$ proved in \cite{BP} are the following (for notation see \cite{BP} and Section 2):
\begin{theorem}
Let $p\in [1,2]$ and $f\in L^p(X)$. Then for $|\eta|<\gamma_p$, $\lambda\in \R$ and $r\ge r_0>0$

$$\sup_\lambda\left[\min\left\{1, \left(\frac{\lambda}r\right)^2\right\}\int_K|\wtilde{f}(\lambda+i\eta,k)|dk\right]\le C_{p, \eta}\Omega_p[f]\left(\frac 1r\right).$$
\end{theorem}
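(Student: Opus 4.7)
Following Bray--Pinsky's template, the plan is to exploit the spherical mean operator $M_s$. Since $M_s$ is convolution with a $K$-bi-invariant probability measure, Helgason's Fourier transform turns it into multiplication by the elementary spherical function: $\wtilde{M_s f}(\lambda,k)=\varphi_\lambda(s)\,\wtilde{f}(\lambda,k)$. Setting $s=1/r$, $g_r:=f-M_{1/r}f$, and integrating over $k\in K$ yields
\[|1-\varphi_{\lambda+i\eta}(1/r)|\int_K|\wtilde{f}(\lambda+i\eta,k)|\,dk\le\int_K|\wtilde{g_r}(\lambda+i\eta,k)|\,dk.\]

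\textbf{Step 2 (Hausdorff--Young in the strip).} For $p\in[1,2)$, $|\eta|<\gamma_p$, and $g\in L^p(X)$, the integral representation of the Helgason Fourier transform, combined with H\"older's inequality and the decay of the Iwasawa kernel in the strip, gives the uniform pointwise bound
\[\sup_{\lambda\in\R,\,k\in K}|\wtilde{g}(\lambda+i\eta,k)|\le C_{p,\eta}\|g\|_p.\]
Integrating over $K$, applying this with $g=g_r$, and using $\|f-M_{1/r}f\|_p\le\Omega_p[f](1/r)$, we reduce the theorem to proving the uniform lower bound
\[|1-\varphi_{\lambda+i\eta}(1/r)|\ge c\min\{1,(\lambda/r)^2\}\qquad(\lambda\in\R,\ |\eta|<\gamma_p,\ r\ge r_0).\]
(The endpoint $p=2$ of the statement is vacuous since $\gamma_2=0$.)

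\textbf{Step 3 (lower bound on the multiplier, the crux).} For $|\lambda|\le r$, I would use the Laplace--Beltrami eigenvalue equation to expand $\varphi_{\lambda+i\eta}(t)=1-c((\lambda+i\eta)^2+\rho^2)t^2+O(t^4)$ as $t\to 0$; since $|\eta|<\gamma_p\le\rho$, the modulus of the leading coefficient is at least $c(\lambda^2+\rho^2-\eta^2)\ge c\lambda^2$, giving $|1-\varphi_{\lambda+i\eta}(1/r)|\ge c(\lambda/r)^2$ once higher-order terms are absorbed (which uses $r\ge r_0$). For $|\lambda|\ge r$, the small-argument asymptotics of $\varphi_{\lambda+i\eta}$ at $|\lambda/r|\ge 1$, obtained from the Harish-Chandra / Anker--Damek--Yacoub expansion on Damek--Ricci spaces, should yield a uniform gap $|\varphi_{\lambda+i\eta}(1/r)|\le 1-c$ in the strip. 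The main obstacle lies in this oscillatory large-$\lambda$ regime: $\varphi_\lambda$ oscillates and could a priori return near $1$ at some intermediate frequencies, so one must exploit the smallness of $t=1/r$ uniformly in $\lambda$ and in the complex strip---which is precisely what the hypothesis $r\ge r_0$ enforces. Once the multiplier estimate is established, dividing through completes the proof.
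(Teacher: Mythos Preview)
Your proposal has two genuine gaps, one in Step~2 and one in Step~3.

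\textbf{Step 2.} The pointwise (i.e.\ $L^\infty$ in $k$) bound
\[
\sup_{\lambda\in\R,\,k\in K}|\wtilde{g}(\lambda+i\eta,k)|\le C_{p,\eta}\|g\|_p
\]
is \emph{false} on a rank-one symmetric space. The paper singles this out explicitly in the introduction: even for $p=1$ and $\eta=0$ one does \emph{not} have $\sup_{\lambda}\|\wtilde f(\lambda,\cdot)\|_{L^\infty(K)}\le C\|f\|_1$; the best one knows is an $L^2(K)$ bound. Your H\"older argument fails because the Iwasawa kernel $x\mapsto e^{(-i\lambda+\rho)A(k^{-1}x)}$ is not in $L^{p'}(X)$ for any finite $p'$: in $NA$ coordinates the integral over $N$ diverges. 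What actually delivers the $L^1(K)$ bound you need is the \emph{restriction theorem} (Theorem~\ref{restriction-thm-1} here, originally \cite{MRSS,S-S}): for $1\le p<2$ and $|\eta|<\gamma_p\rho$,
\[
\int_K|\wtilde g(\lambda+i\eta,k)|\,dk\le C_{p,\eta}\|g\|_p.
\]
This is the tool the paper (following \cite{BP}) uses; it is not an elementary H\"older/Hausdorff--Young estimate.

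\textbf{Step 3.} By fixing $s=1/r$ in Step~1 you are forced to prove the pointwise multiplier bound $|1-\phi_{\lambda+i\eta}(1/r)|\ge c\min\{1,(\lambda/r)^2\}$, and you correctly flag the large-$\lambda$ regime as the obstacle. The paper avoids this entirely. Since $\Omega_p[f](1/r)=\sup_{0<t\le 1/r}\|M_tf-f\|_p$, one keeps $t$ free, applies the restriction bound for every $t\le 1/r$, and then takes the supremum over $t$ on the left. Via Lemma~\ref{BP-lemma-2} one reduces to Bessel functions, and Lemma~\ref{BP-lemma-1} gives
\[
\sup_{0\le z\le 1}\bigl(1-j_\alpha(\lambda z/r)\bigr)\ \ge\ \int_0^1\bigl(1-j_\alpha(\lambda z/r)\bigr)\,dz\ \ge\ C\min\{1,(\lambda/r)^2\},
\]
which is exactly what is needed. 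The freedom in $t$ is what makes the oscillation harmless; your single-scale choice throws this away and leaves you with a delicate (and, as stated, unproved) uniform lower bound in the complex strip.
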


\begin{theorem} For $p\in (1,2)$ let $f$ be a $K$-finite function in $L^p(X)$. Then for $|\eta|<\gamma_p\rho$, $\lambda\in \R$ and $r\ge r_0>0$
$$\left(\int_\R\min\left\{1, \left(\frac{\lambda}r\right)^{2p'}\right\}\int_K|\wtilde{f}(\lambda+i\eta,n)|^{p'}dk|c(\lambda)|^{-2}d\lambda
\right)^{1/p'}\le C_{p,\eta}\Omega_{p}[f]\left(\frac 1r\right).$$
\end{theorem}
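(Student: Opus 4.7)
The plan is to compare the spectral weight $\min\{1,(\lambda/r)^{2p'}\}$ with the integrated multiplier $r\int_0^{1/r}|\varphi_{\lambda+i\eta}(t)-1|^{p'}dt$, where $\varphi_\mu$ is the elementary spherical function on $X$, and to read the left-hand side from the Hausdorff--Young inequality applied to the spherical-mean differences $M_tf-f$, $0<t\le1/r$.

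First, I would use the multiplier identity $\wtilde{M_tf}(\lambda,k)=\varphi_\lambda(t)\wtilde f(\lambda,k)$, with $M_t$ the spherical mean at radius $t$ on $X$, together with its analytic continuation to the strip $|\mathrm{Im}\,\lambda|<\gamma_p\rho$, legitimate for $K$-finite $f\in L^p(X)$, $1<p<2$. The strip Hausdorff--Young inequality for $K$-finite $L^p$ functions then yields, for every $t\in(0,1/r]$,
\[
\int_\R\!\int_K|\varphi_{\lambda+i\eta}(t)-1|^{p'}|\wtilde f(\lambda+i\eta,k)|^{p'}\,dk\,|c(\lambda)|^{-2}d\lambda\le C_{p,\eta}^{p'}\|M_tf-f\|_p^{p'}\le C_{p,\eta}^{p'}\Omega_p[f](1/r)^{p'},
\]
the last step coming from the definition of the modulus of continuity.

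Integrating in $t$ over $(0,1/r)$ and using Fubini yields
\[
\int_\R\!\int_K\Bigl(r\int_0^{1/r}|\varphi_{\lambda+i\eta}(t)-1|^{p'}dt\Bigr)|\wtilde f(\lambda+i\eta,k)|^{p'}\,dk\,|c(\lambda)|^{-2}d\lambda\le C_{p,\eta}^{p'}\Omega_p[f](1/r)^{p'},
\]
so the theorem reduces to the pointwise lower bound
\[
r\int_0^{1/r}|\varphi_{\lambda+i\eta}(t)-1|^{p'}dt\ge c\,\min\{1,(\lambda/r)^{2p'}\},\qquad \lambda\in\R,\ r\ge r_0.
\]
For $|\lambda|\le r$ this follows from the power-series expansion $\varphi_\mu(t)=1-c_n(\mu^2+\rho^2)t^2+O((|\mu|t)^4)$ at $t=0$: substituting $\mu=\lambda+i\eta$ gives $|\varphi_{\lambda+i\eta}(t)-1|\gtrsim(\lambda^2+\rho^2-\eta^2)t^2\ge\lambda^2 t^2$ on $(0,1/r]$ (using $|\eta|<\gamma_p\rho<\rho$), and integrating the $p'$-th power produces the required $(\lambda/r)^{2p'}$ after multiplication by $r$, provided $r_0$ is large enough to absorb the lower-order $\rho^2$, $\eta^2$, and Taylor-error terms. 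For $|\lambda|\ge r$ one uses that $\varphi_{\lambda+i\eta}(t)$ oscillates in $t$ with frequency of order $|\lambda|$ and has exponential decay in $\rho t$, so the interval $(0,1/r]$ contains at least one full ``period'' of $\varphi_{\lambda+i\eta}$; consequently $|\varphi_{\lambda+i\eta}(t)-1|$ stays bounded below by a positive constant on a macroscopic portion of $(0,1/r]$, giving $r\int_0^{1/r}|\varphi_{\lambda+i\eta}(t)-1|^{p'}dt\ge c$.

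The main obstacle I anticipate is this large-$|\lambda|$ estimate: a single value $\varphi_\lambda(1/r)$ may lie close to $1$ at an oscillation point of the spherical function, so the $t$-averaging is essential. Controlling the oscillation uniformly in the imaginary shift $\eta\in(-\gamma_p\rho,\gamma_p\rho)$ via the Harish-Chandra expansion of $\varphi_{\lambda+i\eta}$ is the technical heart; the hypothesis $r\ge r_0$ enters precisely to keep $t=1/r$ in the regime where the asymptotic series is effective and where the Taylor remainder and the $\rho^2/r^2$-type corrections can be safely absorbed.
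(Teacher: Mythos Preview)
Your overall architecture is exactly that of the paper (and of Bray--Pinsky): apply the Hausdorff--Young inequality on the strip to $M_tf-f$, then replace the spectral multiplier $|\varphi_{\lambda+i\eta}(t)-1|$ by the weight $\min\{1,(\lambda/r)^2\}$ via a lower bound after averaging in $t\in(0,1/r]$. The paper obtains that lower bound differently, and more cleanly, than you propose: it invokes the two Bray--Pinsky lemmas quoted here as Lemma~\ref{BP-lemma-2} and Lemma~\ref{BP-lemma-1}, namely the uniform comparison
\[
|1-\varphi_{\mu+i\eta}^{(\alpha,\beta)}(t)|\ge C\,|1-j_\alpha(\mu t)|\qquad(0\le t\le t_0,\ |\eta|\le\rho)
\]
followed by the explicit Bessel estimate $\int_0^1|1-j_\alpha(\lambda z/r)|\,dz\ge C\min\{1,(\lambda/r)^2\}$. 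Minkowski (or Lemma~\ref{lem-Lorentz} in the Lorentz setting) then moves the $t$-average inside the $L^{p'}$-norm, and raising to the $p'$-th power produces $\min\{1,(\lambda/r)^{2p'}\}$. The Bessel comparison packages \emph{all} of the uniformity in $\lambda$ and in the imaginary shift $\eta$; no separate small/large-$|\lambda|$ analysis is needed.

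Your proposed direct route has two soft spots. First, the Taylor argument: the remainder in $\varphi_\mu(t)=1-c(\mu^2+\rho^2)t^2+O(t^4)$ carries a coefficient of size $|\mu^2+\rho^2|^2$, so for $|\lambda|$ near $r$ and $t$ near $1/r$ the error term is of the same order as the main term and your claimed inequality $|\varphi_{\lambda+i\eta}(t)-1|\gtrsim\lambda^2t^2$ does not follow from the expansion alone. One has to restrict to $|\lambda|\le cr$ for some small $c$ and absorb the range $cr\le|\lambda|\le r$ into the ``large'' case. Second, for large $|\lambda|$ the Harish-Chandra expansion is an asymptotic in $t$ away from $0$, whereas here $t\le 1/r\le 1/r_0$ is small; the correct asymptotic regime is $\lambda\to\infty$, $t\to 0$ with $\lambda t$ bounded, which is precisely the Mehler--Heine/Bessel regime encoded in Lemma~\ref{BP-lemma-2}. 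So your sketch, once made rigorous, essentially reproves those two lemmas. Citing them directly, as the paper does, is both shorter and avoids the uniformity issues.
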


The natural question  whether this theorem can be generalized without putting the restriction of $K$-finiteness is asked by the authors in \cite{BP}. They have also conjectured the existence of  analogues of these results for radial functions on harmonic $NA$ groups, which are also known as Damek-Ricci spaces. We shall use both of these names.

We recall that a riemannian symmetric space $X$ of rank one is a quotient space $G/K$ where $G$ is a connected noncompact semisimple Lie group of real rank one with finite centre and $K$ is a maximal compact subgroup of $G$.
We also recall that a harmonic $NA$ group $S$ is a solvable Lie group. The distinguished prototypes of them are the noncompact riemannian symmetric spaces of rank one, which account for a very small subclass of the class of all $NA$ groups (see \cite{ADY}). All the results proved in this article for $NA$ groups will have natural interpretations on symmetric spaces and proving them  will be simpler. Some of the intrinsic difficulties of working with $NA$ groups is the lack of $G$-action and in particular $K$-action (in other words lack of symmetry), the noncompactness of the subgroup $N$ which somewhat takes the place of the maximal compact group $K$ and lack of a rich representation theoretic background.

Purpose of this article is to consider the theorems mentioned above for general functions on Damek-Ricci spaces and to put them in a form which reveals their differences with that on the euclidean spaces.

Two main ingredients of the proof of these theorems in \cite{BP} are analogues of Hausdorff-Young inequality (for radial and $K$-finite functions proved in \cite{EKT,EKu1}) and {\em restriction}  theorem (proved in \cite{MRSS, S-S}); precisely, for $1\le p<2$ and $|\eta|<(2/p-1)\rho$
$$|\wtilde{f}(\lambda+i\eta,k)|dk\le C\|f\|_p.$$
However, it turns out that on symmetric spaces or more generally on $NA$ groups one can have stronger analogues of restriction theorem (see \cite{Ray-Sarkar} and Section 3). In the case of symmetric spaces these results can be interpreted as norm estimates of certain matrix coefficients of  class-$1$ principal series representations of the underlying group $G$ and can be linked to the {Kunze-Stein phenomenon} (see \cite{Cow-Annals}). We need to elaborate on this. Like radial functions the Fourier transform of  a general $L^p$-function also exist on the strip $S_p$ (defined in Section 2) parallel to the real line. But there is an {\em angular} variable ($k$ for symmetric space and  $n$ for $NA$ group).  If $f$ is a radial function in $L^p(X)$ then $\sup_\lambda|\what{f}(\lambda)|\le C\|f\|_p$ in  a smaller strip. But in the absence of radiality one is concerned about  the behavior in $k$ variable of the Fourier transform. We see that the behavior depends on $\Im \lambda$. That is inside the strip on every line parallel to the real axis Fourier transform behaves differently in $k$ variable. We may stress that in particular  it changes as we move from $\alpha+i\eta$ to $\alpha-i\eta$. In the context of symmetric space this can be attributed to Herz's principe de majoration ) and Kunze-Stein phenomenon (\cite{Cow-Annals, Cow-Herz}). 

Coming to the second theorem (which is an application of Hausdorff-Young inequality) we notice that, unlike $\R^n$  on $NA$ the following well known inequality is not true:
$$\sup_{\lambda\in \R}\|\wtilde{f}(\lambda, \cdot)\|_{L^\infty(K)}\le C\|f\|_1.$$
In fact the best result we know is:
$$\sup_{\lambda\in \R}\|\wtilde{f}(\lambda, \cdot)\|_{L^2(K)}\le C\|f\|_1.$$ This indicates that
on these spaces the Hausdorff-Young inequality will involve {\em mixed norms} which will change as  $\lambda$ will vary over a strip on the complex plane. Strips which are symmetric about the real line are the natural domains of the Fourier transforms $\wtilde{f}(\lambda, k)$ for various Lebesgue and Lorentz spaces.
We may stress  here that finer subdivisions of $L^p$ spaces called Lorentz spaces appear naturally in this set up for both of these theorems (see Section 3 for details).

Such results are recently  proved by the authors (\cite{Ray-Sarkar}). In this article we further improve these theorems and as  consequences obtain new analogues of Theorem 1.1 and 1.2. We conclude by noticing that the $L^p$-norm of the spherical mean operator $M_t$ defined in \cite{BP} decays exponentially as $t\rightarrow \infty$ (proved in Section 4 and Section 6). This is a  noneuclidean phenomenon which also vindicates  that in these spaces results will be different from those on the euclidean set up.

\section{preliminaries} Most of the   preliminaries can be found in  \cite{ACB, ADY, Ray-Sarkar}. To make the article self-contained  we shall gather only those  results which are required for this paper. For a detailed account we refer to \cite{Ray-Sarkar}.

Everywhere
in this article for any $p\in [1, \infty)$,  $p'=p/(p-1)$ and $\gamma_p=2/p-1$, $\gamma_\infty=-1$. We note that
$\gamma_p=-\gamma_{p'}$. For a complex number $z$, we will use $\Re z$ and $\Im z$ to
denote respectively the real and imaginary parts of $z$.
For $p\in [1, 2]$ we define $$S_p=\{z\in \C\mid |\Im z|\le \gamma_p\rho\}.$$ By $S_p^\circ$ we denote the interior of the strip.

We will
follow the standard practice of using the letter $C$ for constant,
whose value may change from one line to another. Occasionally the
constant $C$ will be suffixed to show its dependency on important
parameters. The letters $\C$ and $\R$ will denote the set of complex and real numbers respectively.

Let $\mathfrak n=\mathfrak v\oplus \mathfrak z$ be a $H$-type algebra where $\mathfrak v$ and $\mathfrak z$ are vector spaces over $\R$ of dimensions $m$ and $k$ respectively. Indeed $\mathfrak z$ is the centre of $\mathfrak n$ and $\mathfrak v$ is its ortho-complement with respect to the inner product of $\mathfrak n$. Let $N=\exp \mathfrak n$. We shall identify  $\mathfrak v$ and $\mathfrak z$ and $N$ with $\R^m$, $\R^k$ and   $\R^m\times \R^k$ respectively. Elements of $A$ will be identified with $a_t=e^t, t\in \R$. $A$  acts on $N$ by nonisotropic dilation: $\delta_{a_t}(X,Y)=(e^{-t}X, e^{-2t}Y)$. Let $S=NA$ be the semidirect product of $N$ and $A$ under the action above. Then $S$ is a solvable connected and simply connected Lie group with Lie algebra $\mathfrak s=\mathfrak v\oplus\mathfrak z\oplus\R$. It is well known that $S$ is a nonunimodular amenable Lie group. The homogenous dimension of $S$ is $Q=m/2+k$. For convenience we shall also use the symbol $\rho$ for $Q/2$.  An element $x=na=n(X,Y)a\in S$ can be written as $(X, Y, a)$, $X\in \mathfrak v, Y\in \mathfrak z, a\in A$. Precisely $(X, Y, a)$ is identified with $\exp(X+Y)a$. We shall use the notation $A(x)=A(na_t)=t$.

A function $f$ on $S$ is called {\em radial} if for all $x,y\in S$, $f(x)=f(y)$ if $d(x,e)=d(y,e)$, where $d$ is the metric induced by the canonical left invariant riemannian structure of $S$. For a  radial function $f$ we shall also use $f(t)$ to mean $f(a_t)$.

For a suitable function $f$ on $S$ its
radialization $Rf$ is defined as
\begin{equation}Rf(x)=\int_{S\nu}f(y)d\sigma_\nu(y),
\label{radialization}
\end{equation} where $\nu=r(x)$ and $d\sigma_\nu$ is the
surface measure induced by the left invariant Riemannian metric on
the geodesic sphere $S_\nu=\{y\in S\mid d(y, e)=\nu\}$ normalized
by $\int_{S_\nu}d\sigma_\nu(y)=1$. It is clear that $Rf$ is a radial function and if $f$ is
radial then $Rf=f$.

The Poisson kernel ${\mathcal
P}:S\times N\lgra \R$ is given by ${\mathcal P}(na_t,
n_1)=P_{a_t}(n_1^{-1}n)$ where
\begin{equation}P_{a_t}(n)=P_{a_t}(V, Z)=C a_t^{Q}\left(\left(a_t+\frac{|V|^2}4\right)^2+|Z|^2\right)^{-Q},\,\, n=(V,
Z)\in N.
\label{poisson}\end{equation}  The  value of $C$ is adjusted so that $\int_NP_a(n)dn=1$ and $P_1(n)\le 1$ (see \cite[(2.6)]{ACB}).
We also need the following:
\begin{enumerate}
\item  $P_a(n)=P_a(n^{-1})$.
\item $P_{a_t}(n)=P_1(a_{-t}na_t)e^{-2\rho t}$.
\item $\PP(x,n)=\PP(n_1a_t,n)=P_{a_t}(n^{-1}n_1)=P_{a_t}(n_1^{-1}n)$.
\item $\PP_\lambda(x, n)=\PP(x,n)^{1/2-i\lambda/Q}=\PP(x,n)^{-(i\lambda-\rho)/Q}$
\item $R({\mathcal P}_\lambda (\cdot, n))(x)=\phi_\lambda(x){\mathcal P}_\lambda (e, n)$, $R(e^{(i\lambda-\rho)A(\cdot)})(x)=\phi_\lambda(x)$.
\end{enumerate}

The action of class-$1$ principal series  representation $\pi_\lambda, \lambda\in \C$ realized on functions on $N$ is given by: $$(\pi_{-\lambda}(n_1a_t)\phi)(n)=\phi(a_{-t}n_1^{-1}na_t)e^{t(i\lambda-\rho)}.$$
From this it is easy to verify  that $(\pi_{-\lambda}(x)P_1^{1/2-i\lambda/Q})(n)=\PP_\lambda(x,n)$.

The elementary spherical function $\phi_\lambda(x)$ is given by
$$\phi_\lambda(x)=\langle \pi_\lambda(x)P_1^{1/2-i\lambda/Q}, P_1^{1/2-i\lambda/Q}\rangle_{L^2(N)} =
\int_N\PP_\lambda(x,n)\PP_{-\lambda}(e,n)dn.$$ It follows that $\phi_\lambda$ is a radial eigenfunction of the Laplace-Beltrami operator $\mathcal L$ of $S$ with eigenvalue $-(\lambda^2+\rho^2)$ satisfying $\phi_\lambda(x)=\phi_{-\lambda}(x), \phi_\lambda(x)=\phi_\lambda(x^{-1})$ and $\phi_\lambda(e)=1$.
As $\PP_{-i\rho}(x,n)\equiv 1$ for all $x\in S$ and $n\in N$  and $\PP_{i\rho}(x,n)=\PP(x,n)$, $$\phi_{-i\rho}(x)=\int_N\PP_{i\rho}(e,n)=\int_NP_1(n)dn=1.$$

For $\alpha=\frac{m+k-1}2$ and $\beta=\frac{k-1}2$,   $\phi_\lambda$ is identical with the Jacobi function $\phi^{(\alpha, \beta)}_\lambda$ with the {\em ideal situation} of $\alpha>\beta>-\frac 12$ (see\cite{ADY}).
Thus spherical Fourier transform is related to the Jacobi transform.

We define the spherical Fourier transform $\what{f}$ of a suitable radial
function $f$ as
\begin{equation*}\what{f}(\lambda)=\int_Sf(x)\phi_\lambda(x)dx,\end{equation*} whenever the integral converges.

The left invariant Haar measure on $S$ decomposes as $$\int_S f(x)dx=\int_{N\times A}f(na_t)e^{-2\rho t}dtdn,$$ where $dn(X,Y)=dX\,dY$ and $dX, dY, dt$ are Lebesgue measures on $\mathfrak v$, $\mathfrak z$ and $\R$ respectively.

Jacobians of the following transformations will be required for our computations.
\begin{enumerate}
\item[(a)] $\int_Nf(a_tna_{-t})=\int_N f(n)e^{-2\rho t}dn$.
\item[(b)]  $\int_SR_yf(x)dx=\int_Sf(xy)dx=\int_Sf(x)dxe^{2\rho A(y)}$, i.e. the modular function $\Delta(y)=e^{-2\rho A(y)}$. Here $R_y$ denotes the right-translation operator.
\item[(c)] $\int_Sf(x^{-1})dx=\int_Sf(x)e^{2\rho A(x)}dx$ and $\int_Sf(x^{-1})e^{2\rho A(x)}dx=\int_Sf(x)dx$.
\end{enumerate}

For two measurable functions $f$ and $g$ on $S$ we define their convolution as (see \cite[p. 51]{Folland}):
$$f\ast g(x)=\int_Sf(y)g(y^{-1}x)dy=\int_Sf(y^{-1})g(yx)\Delta(y^{-1})dy=\int_Sf(xy^{-1})g(y)\Delta(y^{-1})dy.$$
For a measurable  function $f$ on $S$ we define its  Fourier transform (which is an analogue of the Helgason Fourier transform on the symmetric space) by $$\wtilde{f}(\lambda, n)=\int_Sf(x)\PP_\lambda(x, n)dx,$$ whenever the integral converges.
If $f$ is radial then using (5) above we see that
$\wtilde{f}(\lambda, n)=\what{f}(\lambda)\PP_\lambda(e, n)$.


The Poisson transform of a function $F$ on $N$ is defined as (see
\cite{ACB})
\begin{equation*}{\mathfrak P}_{\lambda}F(x)=\int_N F(n){\mathcal P}_\lambda (x, n)dn.\end{equation*}
Any norm estimate involving the Fourier transform of a function is equivalent to a dual statement involving the Poisson transform. Precisely, for a function $f$  on $S$, a function $F$ on $N$ and for $\lambda\in \C$,
$$\|\wtilde{f}(\lambda, \cdot)\|_{L^q(N)}\le C\|f\|_p\iff \|{\mathfrak P}_{\lambda}F\|_{L^{p'}(S)}\le C
\|F\|_{L^{q'}(N)}.$$

\vspace{2ex}
We shall denote the $(p,q)$-Lorentz spaces  by $L^{p,q}(S)$ and the corresponding norm by $\|\cdot\|^\ast_{p,q}$. We recall that $L^{p,\infty}(S)$ is called weak $L^p$-space. For definitions and other details  on Lorentz spaces we refer to \cite{Graf, S-W, Ray-Sarkar}.

\section{Existence and some properties of the Fourier transform} The following two theorems are proved by the authors
in \cite{Ray-Sarkar}.
\begin{theorem}
Let $f$ be a measurable function in the Lorentz space $L^{p,
q}(S)$.
\begin{enumerate}
\item[(i)] If $1\le p<2$ and $q=1$ then there exists a subset
$N^p$ of $N$ of full Haar measure, depending only on $f$, such
that $\wtilde{f}(\lambda, n)$ exists for all $n\in N^p$ and
$\lambda\in S_p$.

\item[(ii)] If $1<p<2$ and $1<q \le \infty$ then there exists a
subset $N^p$ of $N$ of full Haar measure, depending only on $f$,
such that $\wtilde{f}(\lambda, n)$ exists for all $n\in N^p$ and
$\lambda\in S^\circ_p$.

\item[(iii)] If  $p, q$ are as in {\rm (ii)} then there exists a
subset $N'_p$ of $N$ of full Haar measure, depending only on $f$,
such that $\wtilde{f}(\lambda, n)$ exists for all $n\in N'_p$ and
almost every $\lambda\in \partial S_p$.
\end{enumerate}
\label{HFT-existence}
\end{theorem}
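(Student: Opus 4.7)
The plan is to establish existence of $\wtilde{f}(\lambda, n)$ by applying a generalized Hölder inequality in the Lorentz scale to the integrand $f(x)\PP_\lambda(x, n)$, reducing the problem to estimating the Lorentz norm of $\PP_\lambda(\cdot, n)$ in the space dual to $L^{p,q}(S)$. The two pieces I need are (a) a norm estimate on $\PP_\lambda(\cdot, n)$ independent of $n$, and (b) the appropriate Hölder duality in Lorentz spaces.

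First I would observe that since the left-invariant measure on $S$ is preserved under left translation by elements of $N$, and $\PP_\lambda(n_1 n a_t, n_1)=\PP_\lambda(n a_t, e)$ by the translation property of the Poisson kernel, we have
$$\|\PP_\lambda(\cdot, n_1)\|^*_{p',q'}=\|\PP_\lambda(\cdot, e)\|^*_{p',q'}$$
for every $n_1\in N$. This reduces the question to a single norm. Next, using the explicit formula \eqref{poisson} together with the identity $|\PP_\lambda(x,n)|=\PP(x,n)^{(\rho+\Im\lambda)/Q}$ and the decomposition $dx=e^{-2\rho t}\,dn\,dt$, I would compute the distribution function of $|\PP_\lambda(\cdot,e)|$ and verify that for $\lambda\in S_p^\circ$ it belongs to $L^{p',q'}(S)$, while on the boundary $\partial S_p$ the critical exponent forces $\PP_\lambda(\cdot,e)$ out of $L^{p'}$ but keeps it in the weak space $L^{p',\infty}(S)$.

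With these ingredients in hand the three cases separate cleanly. For (i), the pairing $\|fg\|_1\le C\|f\|^*_{p,1}\|g\|^*_{p',\infty}$ absorbs the boundary failure, so we get absolute convergence of $\wtilde{f}(\lambda,n)$ for \emph{every} $n$ and every $\lambda\in S_p$; restricting to the full-measure set where the integrand is well-defined gives the claimed $N^p$. For (ii), we need $q'<\infty$, hence a genuine $L^{p',q'}$ bound, which is only available in the open strip; Hölder then gives existence for every $n\in N^p$ with $\lambda\in S_p^\circ$. For (iii), I would combine the weak/boundary estimate with a Fubini argument on $\partial S_p\times N$: integrability of
$$\int_{\partial S_p}\int_N |f(x)\PP_\lambda(x,n)|\,dn\,|d\lambda|$$
over a bounded portion of the boundary (after reducing to $n=e$ by translation and using the local $L^{p',q'}$ estimate away from a measure-zero set of $\lambda$) yields a full-measure set $N'_p$ off of which $\wtilde{f}(\lambda,n)$ is defined for a.e.\ $\lambda\in\partial S_p$.

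The hard part I expect is the sharp distribution function computation for $\PP_\lambda(\cdot,e)$, specifically verifying the critical weak-$L^{p'}$ behavior on $\partial S_p$. One must balance the Euclidean-type singularity at the identity of $N$ against the noneuclidean exponential decay governed by the factor $e^{-2\rho t}$, and do so uniformly as $\Im\lambda$ approaches $\pm\gamma_p\rho$. The Lorentz refinement is not a cosmetic improvement here: on the boundary of $S_p$ the plain $L^{p'}$ norm of $\PP_\lambda(\cdot,e)$ diverges, and only the $L^{p',\infty}$--$L^{p,1}$ duality salvages case (i); this is also why case (ii) is forced inside the strip and case (iii) requires the exceptional set in $\lambda$.
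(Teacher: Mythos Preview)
The paper does not prove this theorem; it is quoted from the authors' earlier work \cite{Ray-Sarkar}, so there is no in-paper argument to compare against. I can, however, point to a genuine gap in your strategy.

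Your plan rests on the claim that for $\lambda\in S_p$ the function $x\mapsto \PP_\lambda(x,n)$ lies in $L^{p',\infty}(S)$ (and in $L^{p',q'}(S)$ for $\lambda\in S_p^\circ$). This is false; in fact $\PP_\lambda(\cdot,n)$ lies in no Lorentz space whatsoever. Your left-translation reduction to $n=e$ is correct, so it suffices to look at $g(x)=|\PP_\lambda(na_t,e)|=P_{a_t}(n)^{(\rho+\eta)/Q}$. Using the change of variables $(V,Z)\mapsto(e^{t/2}V',e^{t}Z')$ one has $P_{a_t}(V,Z)=e^{-tQ}P_1(V',Z')$ and $e^{-Qt}\,dn\,dt=dn'\,dt$, so for every $\alpha>0$
\[
\bigl|\{x\in S:\ g(x)>\alpha\}\bigr|
=\int_\R\bigl|\{n':P_1(n')>e^{tQ}\alpha^{Q/(\rho+\eta)}\}\bigr|\,dt
=\frac{1}{Q}\int_0^\infty \nu(u)\,\frac{du}{u},
\]
where $\nu$ is the distribution function of $P_1$ on $N$. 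The right-hand side is independent of $\alpha$, and since $\nu(u)\asymp u^{-1/2}$ as $u\to 0^+$ it equals $+\infty$. Thus the level sets of $\PP_\lambda(\cdot,e)$ all have infinite Haar measure, and no H\"older-type pairing with $f\in L^{p,q}(S)$ is available for a fixed $n$.

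This is not a technicality you can patch: the qualifier ``for $n$ in a set of full measure'' in the statement is forced, and the exceptional set genuinely depends on $f$. The route taken in \cite{Ray-Sarkar} cannot fix $n$; it must integrate over $n$ (against a weight such as a power of $P_1$) so that the resulting kernel on $S$ is a radial quantity comparable to $\phi_{i\eta}$, which \emph{does} lie in the correct Lorentz space $L^{p',\infty}(S)$ for $\eta=\pm\gamma_p\rho$. A Fubini argument then yields existence for almost every $n$. Your sketch for part~(iii) gestures at such a Fubini step, but parts~(i) and~(ii) need the same mechanism, not the pointwise-in-$n$ H\"older inequality you propose.
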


\begin{theorem}[Riemann-Lebesgue Lemma]
Let $1\le p<2$. If $f\in L^{p,1}(S)$   then for almost every fixed
$n\in N$ the map $\lambda\mapsto \wtilde{f}(\lambda, n)$ is
continuous on $S_p$ and analytic on $S_p^\circ$. Furthermore
\begin{equation*}\lim_{|\xi|\rightarrow\infty}\wtilde{f}(\xi+i\eta, n)=0\end{equation*} uniformly in
$\eta\in [-\gamma_p\rho, \gamma_p\rho]$.

For functions in $L^{p,q}(S)$, $q>1$ the assertions above remain
valid for $\lambda\in S_p^\circ$ and for $\eta\in
[-(\gamma_{p}\rho-\delta), (\gamma_{p_1}\rho-\delta)]$ for any
$0<\delta<\gamma_p$. \label{R-L}
\end{theorem}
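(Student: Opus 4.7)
The plan is to split the statement into regularity (continuity and analyticity) and decay, both handled via the pointwise identity $|\PP_\lambda(x,n)|=\PP(x,n)^{1/2+\Im\lambda/Q}$. Since $\rho/Q=1/2$, for $\lambda\in S_p$ the exponent $s:=1/2+\Im\lambda/Q$ lies in $[1/p',1/p]$, and for $\lambda$ in any compact subset of $S_p^\circ$ it stays in a compact subset of $(1/p',1/p)$. The explicit formula \eqref{poisson} gives $\PP(x,n)\le\|P_{a_{A(x)}}\|_\infty<\infty$ uniformly in $n$, so on every compact $K\subset S$ the quantity $\PP(\cdot,n)^s$ is bounded uniformly in $n\in N$ and $s\in[1/p',1/p]$.

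\emph{Regularity.} For $f\in L^{p,1}(S)$ and $n$ in the full-measure set $N^p$ from Theorem \ref{HFT-existence}, both $\int_S|f(x)|\PP(x,n)^{1/p'}dx$ and $\int_S|f(x)|\PP(x,n)^{1/p}dx$ are finite --- this is precisely the content of existence at $\Im\lambda=\pm\gamma_p\rho$. Since $\PP(x,n)^s\le\PP(x,n)^{1/p'}+\PP(x,n)^{1/p}$ for $s\in[1/p',1/p]$ (monotonicity of $u\mapsto u^s$ separately on $(0,1]$ and $[1,\infty)$), the product $|f(x)|\bigl(\PP(x,n)^{1/p'}+\PP(x,n)^{1/p}\bigr)$ is an integrable envelope for $|f(x)\PP_\lambda(x,n)|$ uniformly over $\lambda\in S_p$. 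Dominated convergence gives continuity on $S_p$, and Morera combined with Fubini on a triangular contour in $S_p^\circ$ (justified by the envelope and the analyticity of $\lambda\mapsto\PP_\lambda(x,n)$ as a complex power of a positive function) yields analyticity on $S_p^\circ$. For $f\in L^{p,q}(S)$ with $q>1$ the same argument runs only on compact subsets of $S_p^\circ$, yielding continuity and analyticity on the open strip.

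\emph{Decay.} I would first establish decay on the dense subspace $C_c^\infty(S)\subset L^{p,q}(S)$ for $q<\infty$. For $g\in C_c^\infty(S)$, the eigenfunction identity $\mathcal{L}\PP_\lambda(\cdot,n)=-(\lambda^2+\rho^2)\PP_\lambda(\cdot,n)$ together with compact support of $g$ permits integration by parts, giving after $k$ iterations
\[
\wtilde{g}(\lambda,n)=\frac{(-1)^k}{(\lambda^2+\rho^2)^k}\,\widetilde{(\mathcal{L}^\ast)^k g}(\lambda,n),
\]
where $\mathcal{L}^\ast$ is the formal adjoint of $\mathcal{L}$ with respect to the left Haar measure. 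Uniform boundedness of $\PP(\cdot,n)^s$ over $\mathrm{supp}(g)$ (from paragraph~1) gives $|\wtilde{g}(\xi+i\eta,n)|=O(|\xi|^{-2k})$ as $|\xi|\to\infty$, uniformly in $n\in N$ and $|\eta|\le\gamma_p\rho$. To pass to $f\in L^{p,1}(S)$, choose $g_j\to f$ in $L^{p,1}$ and invoke the pointwise restriction bound
\[
\sup_{\lambda\in S_p}\bigl|\widetilde{(f-g_j)}(\lambda,n)\bigr|\le C(n)\,\|f-g_j\|^\ast_{p,1},
\]
valid for $n$ in a full-measure set; this follows from H\"older in Lorentz spaces combined with $\sup_{s\in[1/p',1/p]}\|\PP(\cdot,n)^s\|^\ast_{p',\infty}<\infty$, essentially the restriction principle from \cite{Ray-Sarkar} underlying Theorem \ref{HFT-existence}. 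A standard $\varepsilon/3$ argument then completes the case $q=1$; for $1<q<\infty$ the analogous bound on the smaller strip $|\Im\lambda|\le\gamma_p\rho-\delta$ delivers the $\delta$-shrunk conclusion.

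\emph{Main obstacle.} The technical heart is simultaneous control of the exceptional set: the density argument produces pointwise decay in $n$ only if a single full-measure $N^p$, depending only on $f$, supports both existence and the pointwise restriction bound for every $h$ in the approximating sequence. This reduces to showing that $\PP(\cdot,n)^s\in L^{p',\infty}(S)$ with norm locally bounded in $s\in[1/p',1/p]$ for $n$ outside a null set independent of $s$. The case $q=\infty$ is subtler still, since $C_c^\infty(S)$ is not dense in $L^{p,\infty}(S)$; there one would replace the approximation by a layer-cake decomposition of $f$ into a piece in a smaller Lorentz space plus a tail handled directly by the restriction inequality.
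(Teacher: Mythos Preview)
The paper does not prove this theorem: it is quoted verbatim from \cite{Ray-Sarkar}, introduced by the sentence ``The following two theorems are proved by the authors in \cite{Ray-Sarkar}.'' So there is no in-paper argument to compare against; your proposal stands on its own.

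Your strategy is the natural one and essentially sound. The envelope $\PP(\cdot,n)^{1/p'}+\PP(\cdot,n)^{1/p}$ dominates $|\PP_\lambda(\cdot,n)|$ throughout $S_p$, and its integrability against $|f|$ is exactly what Theorem~\ref{HFT-existence} guarantees at the two boundary lines; dominated convergence and Morera then give continuity and analyticity with no trouble. For decay, integration by parts against $\mathcal L$ on $C_c^\infty(S)$ is clean (note $\mathcal L$ is formally self-adjoint for the left Haar measure, which coincides with the Riemannian volume, so $\mathcal L^\ast=\mathcal L$), and the $\varepsilon/3$ passage to $L^{p,1}$ rests on the weak-$L^{p'}$ membership of $\PP(\cdot,n)^s$ uniformly in $s\in[1/p',1/p]$, which is precisely the mechanism behind Theorem~\ref{HFT-existence}.

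The obstacle you flag is real but manageable: the full-measure set $N^p$ in Theorem~\ref{HFT-existence} is constructed so that $n\mapsto\|\PP(\cdot,n)^{1/p'}+\PP(\cdot,n)^{1/p}\|^\ast_{p',\infty}$ is finite, a condition on $n$ alone and not on $f$ or the approximants $g_j$; hence the same $N^p$ serves for every $h\in L^{p,1}(S)$, and the $\varepsilon/3$ argument goes through for $n\in N^p$. Your remark that the statement for $q=\infty$ requires a separate treatment (no density of $C_c^\infty$) is also correct; the theorem as stated covers $q>1$ only on the open strip, and there one can simply embed $L^{p,\infty}\hookrightarrow L^{p_1,1}$ for any $p_1$ with $p<p_1<2$ and apply the $q=1$ case at level $p_1$, which recovers exactly the $\delta$-shrunk conclusion.
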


Here are  improved versions of some relevant theorems proved in \cite{Ray-Sarkar}:

\begin{theorem}[Restriction on line]

For $f\in L^1(S)$, $q\in [1, \infty]$ and $\alpha\in \R$,
\begin{equation*}\left(\int_N|\wtilde{f}(\alpha+i\gamma_q\rho, n)|^q dn\right)^{1/q}\le \|f\|_1.\end{equation*}

For $f\in L^{p, \infty}(S), 1<p<2$,  $p<q<p'$ and $\alpha\in \R$,
\begin{equation*}\left(\int_N|\wtilde{f}(\alpha+i\gamma_{q}\rho, n)|^q dn\right)^{1/q}\le C_{p,q}\|f\|^\ast_{p, \infty}.\end{equation*}
\label{restriction-thm-1}
\end{theorem}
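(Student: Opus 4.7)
The plan for Part~1 is a single application of Minkowski's integral inequality. Applied to the defining formula $\wtilde{f}(\lambda,n)=\int_S f(x)\PP_\lambda(x,n)\,dx$ it gives
\[
\|\wtilde f(\lambda,\cdot)\|_{L^q(N)}\le\int_S|f(x)|\,\|\PP_\lambda(x,\cdot)\|_{L^q(N)}\,dx,
\]
so the whole job is to check that $\|\PP_\lambda(x,\cdot)\|_{L^q(N)}\equiv 1$ when $\Im\lambda=\gamma_q\rho$. Using $\PP_\lambda(x,n)=\PP(x,n)^{1/2-i\lambda/Q}$ together with $\rho=Q/2$ and $\gamma_q=2/q-1$, the real part of the exponent becomes $1/2+\Im\lambda/Q=1/q$, so $|\PP_\lambda(x,n)|^q=\PP(x,n)$. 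The Poisson-kernel identity $\int_N\PP(x,n)\,dn=\int_N P_{a_t}(n_1^{-1}n)\,dn=1$ (normalization of $P_{a_t}$ and translation-invariance of $dn$) closes the case $q<\infty$; the endpoint $q=\infty$ is the same argument with the exponent read as $0$, making $|\PP_\lambda(x,n)|$ identically $1$.

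For Part~2 the plan is a truncation/real-interpolation argument with endpoints (i)~the $L^1\to L^q$ bound from Part~1 and (ii)~a strong-type $L^{p_1}(S)\to L^q(N)$ restriction bound at the same spectral point $\lambda$ available from \cite{Ray-Sarkar}. Fix $p\in(1,2)$ and $q\in(p,p')$; since $p<q$ and $p<q'$, the interval $(p,\min(q,q'))$ is non-empty, and for any $p_1$ in it the condition $|\Im\lambda|=|\gamma_q|\rho<\gamma_{p_1}\rho$ places $\lambda\in S_{p_1}^\circ$, so the cited bound applies. Given $f\in L^{p,\infty}(S)$ and $t>0$, split $f=f^>+f^<$ with $f^>=f\chi_{\{|f|>t\}}$ and $f^<=f-f^>$. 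Layer-cake computations using the standard distribution estimate $|\{|f|>s\}|\le(\|f\|^\ast_{p,\infty}/s)^p$ yield
\[
\|f^>\|_1\le C_p\,(\|f\|^\ast_{p,\infty})^p\,t^{1-p},\qquad\|f^<\|_{p_1}\le C_{p,p_1}\,(\|f\|^\ast_{p,\infty})^{p/p_1}\,t^{1-p/p_1},
\]
the hypothesis $p_1>p$ being precisely what makes the integrand in the second layer-cake integrable at the origin. Applying Part~1 to $f^>$, the \cite{Ray-Sarkar} bound to $f^<$, adding, and choosing $t=\|f\|^\ast_{p,\infty}$ renders each term a constant multiple of $\|f\|^\ast_{p,\infty}$, giving the desired estimate.

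The step that deserves the most care is a range-matching: I must confirm that the $L^{p_1}\to L^q$ strong-type restriction theorem of \cite{Ray-Sarkar} is really valid across the entire interval $p_1\in(p,\min(q,q'))$ that the interpolation setup needs, uniformly for every admissible pair $p\in(1,2)$ and $q\in(p,p')$. Once that compatibility is granted, the remaining ingredients — Minkowski, the Poisson-kernel identity $\int_N\PP(\cdot,n)\,dn=1$, and the truncation bookkeeping — are routine.
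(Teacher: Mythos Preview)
Your proof is correct, and Part~1 via Minkowski plus the Poisson--kernel normalization is exactly the standard argument (the paper omits it, saying only ``We will prove only the second part'').

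For Part~2 you take a genuinely different route from the paper. The paper invokes the Ray--Sarkar strong-type restriction bound at \emph{two} exponents $p_1<p<p_2$ straddling $p$, dualizes each to a Poisson-transform estimate $L^{q'}(N)\to L^{p_i'}(S)$, applies Lorentz-space interpolation (Grafakos) to land in $L^{p',1}(S)$, and dualizes back to $L^{p,\infty}(S)\to L^q(N)$. You instead use Part~1 as one endpoint and a single Ray--Sarkar bound at $p_1\in(p,\min(q,q'))$ as the other, then run the truncation by hand with the layer-cake estimates and the choice $t=\|f\|^\ast_{p,\infty}$. Your argument is more elementary and self-contained---it avoids duality entirely and makes Part~1 do real work in Part~2---while the paper's version packages everything into a cited interpolation theorem. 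Your range check $p_1<\min(q,q')$ is precisely what is needed to place $\alpha+i\gamma_q\rho$ in $S_{p_1}^\circ$, so the compatibility concern you flag is resolved: that is exactly the hypothesis under which \cite[Theorem~4.2]{Ray-Sarkar} applies, and it is the same input the paper relies on.
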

\begin{proof}
We will prove only the second part. We take $p_1, p_2\ge 1$ such that $p_1<p<p_2<q<p'$. Using the result in \cite[Theorem 4.2]{Ray-Sarkar} we have
\begin{eqnarray*}
\|\wtilde{f}(\alpha+i\gamma_q\rho, \cdot)\|_{L^q(N)}\le C_{p_1,q}\|f\|_{p_1}
\end{eqnarray*}
which is equivalent to the following by duality:
\begin{eqnarray*}
\|\mathfrak P_{\alpha+i\gamma_q\rho}\xi\|_{p_1'}\le C_{p_1,q}\|\xi\|_{L^{q'}(N)}.
\end{eqnarray*}
Through similar arguments we also get
\begin{eqnarray*}
\|\mathfrak P_{\alpha+i\gamma_q\rho}\xi\|_{p_2'}\le C_{p_2,q}\|\xi\|_{L^{q'}(N)}.
\end{eqnarray*}
We interpolate between the two results above (\cite[p. 64, 1.4.2]{Graf}) to get
\begin{eqnarray*}
\|\mathfrak P_{\alpha+i\gamma_q\rho}\xi\|^\ast_{p',1}\le C_{p_1, p_2,p,q}\|\xi\|_{L^{q'}(N)}.
\end{eqnarray*} as $p_2'<p'<p_1'$. The last result is equivalent to (by duality)
\begin{eqnarray*}
\|\wtilde{f}(\alpha+i\gamma_q\rho, \cdot)\|_{L^q(N)}\le C_{p_1, p_2,p,q}\|f\|^\ast_{p, \infty}.
\end{eqnarray*}
\end{proof}

It is clear that for $p,q,\alpha$ as in the second part of the theorem above and $1\le r<\infty$, if $f\in L^{p,r}(S)$ then
\begin{equation*}\left(\int_N|\wtilde{f}(\alpha+i\gamma_{q}\rho, n)|^q dn\right)^{1/q}\le C_{p,q}\|f\|^\ast_{p, r}.\end{equation*}

To have a norm estimate of $\wtilde{f}(\lambda, \cdot)$ which is uniform over the strip  $S_q$, we  consider a weighted measure space $(N, P_1(n)dn)$.

\begin{corollary}[Restriction on strip] Let $$L^{q}(N, P_1)=\{f \text{ measurable on } N
\mid\int_N|f(n)|^qP_1(n) dn<\infty\}.$$

{\em (a)} Let $1\le p<q\le 2$ and  $1\le r\le q$. If $f\in L^{p, \infty}(S)$ then
\begin{equation*}\|\wtilde{f}(\lambda, \cdot)\|_{L^{r}(N, P_1)}\le C_{p,q}\|f\|^\ast_{p, \infty}\end{equation*} for
any $\lambda$ in the strip ${\mathcal S}_{q}=\{z\in \C\mid |\Im
z|\le \gamma_{q}\rho\}.$

{\em (b)} Let $1\le p<q<2$ and $f\in L^{p,\infty}(S)$. Then $$\|\wtilde{f}(\lambda, \cdot)\|_{L^{q,1}(N, P_1)}\le C_{\lambda,p,q}\|f\|^\ast_{p,\infty}\text{ for all }\lambda\in S_q^\circ.$$

{\em (c)} For $p<q<q_1\le 2$, $\lambda\in \R$
\begin{equation*}\|\wtilde{f}(\lambda+i\gamma_{q_1}, \cdot)\|^\ast_{L^{q,1}(N, P_1)}\le C_{p,q,q_1}\|f\|^\ast_{p, \infty}\end{equation*}
\label{cor-restriction}
\end{corollary}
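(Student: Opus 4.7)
The three parts all rest on Theorem~\ref{restriction-thm-1} (restriction on line) together with two elementary facts: $P_1(n)\le 1$ and $\int_N P_1(n)\,dn=1$. Consequently $P_1(n)\,dn$ is a probability measure on $N$, yielding the continuous inclusions $L^s(N,P_1)\hookrightarrow L^r(N,P_1)$ for $r\le s$ and $L^s(N,P_1)\hookrightarrow L^{r,1}(N,P_1)$ for $r<s$, with constants depending only on $r$ and $s$.

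For part (a), H\"older (via the inclusion above) reduces matters to $r=q$. It then suffices to bound $\|\wtilde f(\lambda,\cdot)\|_{L^q(N,P_1)}$ uniformly for $\lambda\in\mathcal S_q$. On the top edge $\Im\lambda=\gamma_q\rho$, Theorem~\ref{restriction-thm-1} gives $\|\wtilde f(\lambda,\cdot)\|_{L^q(N)}\le C\|f\|^*_{p,\infty}$, which passes to $L^q(N,P_1)$ since $P_1\le 1$. On the bottom edge $\Im\lambda=-\gamma_q\rho=\gamma_{q'}\rho$, Theorem~\ref{restriction-thm-1} applied with $q'$ in place of $q$ (valid because $p<q'<p'$) gives $\|\wtilde f(\lambda,\cdot)\|_{L^{q'}(N)}\le C\|f\|^*_{p,\infty}$, which then yields the $L^q(N,P_1)$ bound by combining $P_1\le 1$ with the probability-measure inclusion $L^{q'}(N,P_1)\hookrightarrow L^q(N,P_1)$. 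To propagate these boundary bounds into the interior of $\mathcal S_q$, apply Hadamard's three-lines theorem to
\[
\Phi_g(\lambda):=\int_N \wtilde f(\lambda,n)\,\overline{g(n)}\,P_1(n)\,dn,
\]
with $g$ in the unit ball of $L^{q'}(N,P_1)$: the function $\Phi_g$ is analytic on $S_p^\circ$ by Theorem~\ref{R-L}, satisfies the matching boundary bounds $|\Phi_g|\le C\|f\|^*_{p,\infty}$ just derived, and decays along horizontal lines of $\mathcal S_q$ by the Riemann--Lebesgue part of Theorem~\ref{R-L} together with dominated convergence. Dualizing over $g$ gives (a).

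Parts (b) and (c) follow quickly. For (b), given $\lambda\in S_q^\circ$ choose $q_1\in(q,2]$ with $|\Im\lambda|\le\gamma_{q_1}\rho$; applying (a) with $q_1$ in place of $q$ gives $\|\wtilde f(\lambda,\cdot)\|_{L^{q_1}(N,P_1)}\le C_{p,q_1}\|f\|^*_{p,\infty}$, and the embedding $L^{q_1}(N,P_1)\hookrightarrow L^{q,1}(N,P_1)$ (valid since $q<q_1$) completes the estimate, with the constant depending on $\lambda$ through the choice of $q_1$. For (c), Theorem~\ref{restriction-thm-1} applied at the line $\Im\lambda=\gamma_{q_1}\rho$ directly furnishes $\|\wtilde f(\lambda+i\gamma_{q_1}\rho,\cdot)\|_{L^{q_1}(N)}\le C_{p,q_1}\|f\|^*_{p,\infty}$; combining $P_1\le 1$ with the same embedding $L^{q_1}(N,P_1)\hookrightarrow L^{q,1}(N,P_1)$ produces the asserted Lorentz estimate.

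The main obstacle is the three-lines step in part (a): beyond pointwise analyticity of $\Phi_g$, one must verify suitable growth/decay uniformly across horizontal lines in $\mathcal S_q$, and for this the Riemann--Lebesgue lemma has to be used in the Lorentz setting (Theorem~\ref{R-L} for $L^{p,q}$ data) rather than in its simpler $L^p$ form. Once (a) is secured, (b) and (c) reduce to soft measure-theoretic inclusions.
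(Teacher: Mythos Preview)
Your arguments for (b) and (c) are correct and essentially coincide with the paper's. The paper also obtains the $L^{q_1}(N,P_1)$ bound from Theorem~\ref{restriction-thm-1} and $P_1\le 1$, then passes to $L^{q,1}(N,P_1)$; it phrases this passage as an ``interpolation'' between the $L^{q_1}$ estimate and an auxiliary $L^{q_2}$ estimate (with $q_2<q$, coming from the finite-measure inclusion), but on a probability space that interpolation collapses to precisely the embedding $L^{q_1}(N,P_1)\hookrightarrow L^{q,1}(N,P_1)$ you invoke directly. For (c) the paper deduces the statement from (b) by freezing $q_1$, whereas you go straight from Theorem~\ref{restriction-thm-1}; the two are equivalent.

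For (a) the routes genuinely differ. The paper (via \cite[Corollary~4.4]{Ray-Sarkar}) argues pointwise in $\lambda$ rather than by complex interpolation: for each $\lambda\in S_q$ one has $\Im\lambda=\gamma_s\rho$ for a unique $s\in[q,q']\subset(p,p')$, Theorem~\ref{restriction-thm-1} at that level gives $\|\wtilde f(\lambda,\cdot)\|_{L^s(N)}\le C_{p,s}\|f\|^\ast_{p,\infty}$, and then $P_1\le 1$ together with the probability-measure inclusion $L^s(N,P_1)\hookrightarrow L^r(N,P_1)$ (valid since $r\le q\le s$) finishes. No three-lines theorem is needed; the only point to check is that $C_{p,s}$ remains bounded for $s$ in the compact interval $[q,q']$. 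Your three-lines route is a valid alternative whose payoff is that the uniform constant comes for free from the two edge estimates, at the cost of the analyticity and boundedness verifications for $\Phi_g$ that you correctly flag as the main obstacle. In fact the a~priori boundedness of $\Phi_g$ on $S_q$ required for Phragm\'en--Lindel\"of can itself be supplied by the pointwise argument above, so the two approaches are closer than they first appear.
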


\begin{proof}  Applying the arguments of \cite[Corollary 4.4]{Ray-Sarkar} on Theorem \ref{restriction-thm-1}  we get (a).

For (b) we take a $\lambda\in S_q^\circ$. Then  $\lambda=\alpha+i\gamma_{q_1}\rho$ for some $q<q_1\le 2$. We choose a $q_2$ such that $1\le q_2 <q<q_1\le 2$. By Theorem \ref{restriction-thm-1} and as $P_1(n)\le 1$
$$\|\wtilde{f}(\alpha+i\gamma_{q_1}\rho, \cdot)\|_{L^{q_1}(N, P_1)}\le C_{p,q_1}\|f\|^\ast_{p,\infty}.$$ As   $(N, P_1(n))$ is a finite measure space, this implies
$$\|\wtilde{f}(\alpha+i\gamma_{q_1}\rho, \cdot)\|_{L^{q_2}(N, P_1)}\le C_{p,q_1}\|f\|^\ast_{p,\infty}.$$ An interpolation (\cite[p.64]{Graf}) between these two results gives (b).

In (c) we make the constant independent of $\lambda$ by fixing $q_1$.
\end{proof}

We consider the product measure space $(Y, dy)=(N, dn)\times (\R, |c(\lambda)|^{-2}d\lambda)$. For a measurable function $F(\lambda, n)$ on this measure space we denote mixed norm of $F$ by
$$\|F\|^\ast_{(q; p',r)}=\left\|\left(\int_N|F(\cdot, n)|^qdn\right)^{1/q}\right\|^\ast_{p',r}.$$
\begin{theorem}[Hausdorff-Young Theorem] Let  $1\le p\le 2$. Then

{\em (a)} for  $p\le q\le p'$,
\begin{equation*}\left(\int_\R\left(\int_N|\wtilde{f}(\lambda+i\gamma_q\rho, n)|^qdn\right)^{\frac{p'}q}
|c(\lambda)|^{-2}d\lambda\right)^{\frac 1{p'}}\le
C_{p,q}\|f\|_p.\end{equation*}

{\em (b)} for $p< q < p'$, $p'\le r\le \infty$ and $1\le s\le \infty$
$$\|\wtilde{f}(\cdot+i\gamma_q,\cdot)\|^\ast_{(q;r,s)}\le C_{p,q,r}\|f\|^\ast_{p,s}.$$

\label{H-Y}
\end{theorem}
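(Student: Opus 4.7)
The plan is to derive both parts by Stein complex interpolation between the endpoints $p=1$ and $p=2$, building on the line-restriction estimates of Theorem \ref{restriction-thm-1}.

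For (a), I first check the two endpoints. At $p=1$, $p'=\infty$, so the left-hand side collapses to $\sup_{\lambda\in\R}\|\wtilde f(\lambda+i\gamma_q\rho,\cdot)\|_{L^q(N)}$, which is bounded by $\|f\|_1$ for every $q\in[1,\infty]$ by the first half of Theorem \ref{restriction-thm-1}. At $p=2$, the constraint $p\le q\le p'$ forces $q=2$ and $\gamma_q=0$, so the inequality is Plancherel for the non-radial Fourier transform on $S$. For an intermediate target $(p,q)$ with $1<p<2$ and $p\le q\le p'$, set $\theta=2-2/p\in(0,1)$ and choose $q_0\in[1,\infty]$ with $1/q=(1-\theta)/q_0+\theta/2$; as $q$ sweeps $[p,p']$, $q_0$ sweeps $[1,\infty]$. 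For $f\in C_c^\infty(S)$ consider the analytic family on $0\le\Re z\le 1$ defined by
$$T_zf(\lambda,n)\;=\;\wtilde f\bigl(\lambda+i(1-z)\gamma_{q_0}\rho,\,n\bigr).$$
Theorems \ref{HFT-existence} and \ref{R-L} show that $z\mapsto T_zf(\lambda,n)$ is holomorphic on the open strip, continuous up to its boundary, and of admissible growth in $|\Im z|$. The boundary bounds on $\Re z=0$ and $\Re z=1$ are exactly the two endpoints above, so Stein's theorem applied to $T_\theta$ yields (a) for $f\in C_c^\infty(S)$; density in $L^p(S)$ then completes the argument.

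For (b), the strict inequality $p<q<p'$ places every shift $\gamma_q\rho$ strictly inside $S_p^\circ$, so the relevant boundary estimates are now the weak-type restriction provided by the second half of Theorem \ref{restriction-thm-1}. I would rerun the same analytic family as in (a), but with both vertical-line endpoints taken as weak-type bounds of the form $L^{p_i,\infty}(S)\to L^\infty(\R;L^{q_i}(N))$, for $p_0<p<p_1$ close to $p$ and $q_i$ matched so that the interpolation lands precisely at the target $(p,q)$. Interpolation of analytic families between weak-type endpoints delivers a Lorentz-space conclusion on the outer $\lambda$-variable (cf.\ Corollary \ref{cor-restriction} for the parallel strip arguments), yielding exactly the mixed-norm bound for every admissible $r\in[p',\infty]$ and $s\in[1,\infty]$.

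The main technical obstacle is the rigorous verification of Stein's admissibility conditions: showing that for $f\in C_c^\infty(S)$ the family $z\mapsto T_zf$ is holomorphic with at most sub-exponential growth in $|\Im z|$ in the appropriate target norms, and that the vertical-line boundary estimates hold uniformly in the horizontal translation induced by $\Im z$. The first point follows from the explicit integral defining $\wtilde f(\lambda,n)$ together with the compact support of $f$; the second is automatic, since the restriction estimates in Theorem \ref{restriction-thm-1} are uniform in $\alpha\in\R$. Once these are in place, the passage from $C_c^\infty(S)$ to the full $L^p$ or $L^{p,s}$ class is routine via density and Theorem \ref{R-L}.
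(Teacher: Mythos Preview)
Your treatment of (a) is fine: the paper simply cites part (a) from \cite[Theorem 4.6]{Ray-Sarkar}, and the Stein complex interpolation you outline between the $L^1$ restriction estimate (Theorem \ref{restriction-thm-1}) and Plancherel is a correct way to establish it. Your computation $(1-\theta)\gamma_{q_0}=\gamma_q$ checks out, and the shift in $\Im z$ becomes a harmless real translation in $\lambda$, so the boundary bounds are indeed uniform.

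For (b), however, there is a genuine gap. Both of your proposed endpoints are of the form $L^{p_i,\infty}(S)\to L^\infty\bigl(\R;L^{q_i}(N)\bigr)$; interpolating two bounds that are $L^\infty$ in the $\lambda$-variable can only return $L^\infty$ in $\lambda$, never an $L^{r,s}$ norm with $r<\infty$. So your scheme cannot reach the full range $p'\le r\le\infty$ claimed in (b). (There is also the side issue that Stein's analytic interpolation with Lorentz-space \emph{domain} endpoints $L^{p_i,\infty}$ is not standard and would itself need justification.)

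The paper's argument for (b) avoids all of this by a much simpler device: it \emph{freezes} $q$ and treats (a) as a black box. Define the sublinear operator
\[
Tf(\lambda)=\Bigl(\int_N|\wtilde f(\lambda+i\gamma_q\rho,n)|^q\,dn\Bigr)^{1/q}
\]
from functions on $S$ to functions on $(\R,|c(\lambda)|^{-2}d\lambda)$. Choose $p_1,p_2$ with $1\le p_1<p<p_2<q$; then $p_i<q<p_i'$, so part (a) applied at each $p_i$ gives $\|Tf\|_{p_i'}\le C\|f\|_{p_i}$. Now a single application of real (Marcinkiewicz--Lorentz) interpolation \cite[p.~197]{S-W} yields $T:L^{p,s}(S)\to L^{r,s}(\R,|c|^{-2}d\lambda)$ for every $r\in[p',\infty]$ and $s\in[1,\infty]$, which is exactly (b). The point you are missing is that the Lorentz structure in the outer variable comes from real interpolation \emph{between two instances of (a)}, not from a second pass of complex interpolation with weak restriction endpoints.
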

\begin{proof} Part (a) is proved in \cite[Theorem 4.6]{Ray-Sarkar}.

For (b)
we consider the operator $T$ between the measure spaces $(S, dx)$ and $(\R, |(c(\lambda)|^{-2}d\lambda)$ defined by:
$$Tf(\lambda)=\|\wtilde{f}(\lambda+i\gamma_q\rho, \cdot)\|_q.$$ We choose $p_1$ and $p_2$ such that $1\le p_1<p<p_2<q$. Then by  \cite[Corollary 4.7 ]{Ray-Sarkar}
$\|Tf\|_{p_1'}\le C_{p,q}\|f\|_{p_1}$ and $\|Tf\|_{p_2'}\le C_{p,q}\|f\|_{p_2}$. Interpolating them (\cite[p.197]{S-W}))
we get (b).
\end{proof}

We note that part (b) of the theorem above generalizes an euclidean result (see \cite[p. 200]{S-W}) where teh inequality is proved for $r=s=p'$.

\section{The spherical mean operators}
Let $\sigma_t$ be the normalized surface measure of the geodesic sphere of radius $t$. For a suitable function $f$ on $S$ we define the spherical mean operator $M_tf=f\ast \sigma_t$.
Using the radialization operator $R$ (see Section 2) then $$M_tf(x)=R(\, {}^x\!f)(t)$$ where ${}^x\!f$ is the right-translation of $f$ by $x$.

We need to make the radialization operator more precise.
We note that if $d(x, e)=t$ where $x=na_r\in S$ and $n=(X, Y)$ then
$$(\cosh t)^2=\left[\cosh r+e^r|X|^2\right]^2+e^{2r}|Y|^2.$$
We define the surface for any $t\ge |s|$,
$$T_{t,s}=\{(X, Y)\in \R^m\times \R^k\mid (\cosh t)^2=\left[\cosh r+e^r|X|^2\right]^2+e^{2r}|Y|^2\}.$$ Then $T_{t,s}$ is the set of points $P=P(X, Y)\in \R^m\times\R^k=N$ such that $d(Pa_s, e)=t$. Let $dw_{t,s}$ be the induced measure on $T_{t,s}$ such that for a suitable function $\Phi$ on $\R^m\times \R^k$,
$$\int_{\R^m\times\R^k}\Phi(X,Y)dXdY=\int_{t\ge|s|}\left[\int_{T_{t,s}}\Phi(P)dw_{t,s}(P)\right]dt.$$

Then the radialization opertaor $R$ can be defined by the following:
$$R(\Phi)(t)=\int_{|s|<t}\left[\int_{T_{t,s}}\Phi(X,Y, a_s)dw_{t,s}(X,Y)\right]e^{-2\rho s}ds.$$ Using this expression of radialization we shall prove the $(p,p)$ property of $M_t$.

\begin{proposition}
For $1\le p\le\infty$ $$\|M_tf\|_p\le \phi_{i\gamma_p\rho}(a_t)\|f\|_p.$$
\end{proposition}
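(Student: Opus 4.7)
The estimate is sharper than what a pointwise inequality could give: since $\phi_{i\gamma_p\rho}(x) = \int_N \PP(x,n)^{1/p} P_1(n)^{1/p'}\,dn$, H\"older combined with $\int_N \PP(x,n)\,dn = \int_N P_1(n)\,dn = 1$ gives $\phi_{i\gamma_p\rho}(a_t) \le 1$, with strict inequality for $p \in (1,\infty)$ and $t > 0$. So the claim strictly improves on the trivial Minkowski bound $\|M_tf\|_p \le \|f\|_p$. My plan is to verify the inequality at the endpoints $p \in \{1,2,\infty\}$ and then close the gap by Stein's complex interpolation for intermediate $p$.

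At $p = 1$ and $p = \infty$ one has $\phi_{\pm i\rho}(a_t) = 1$, so the claim reduces to $\|M_tf\|_p \le \|f\|_p$; applying Minkowski's integral inequality to $M_tf(x) = \int_{S_t} f(yx)\,d\sigma_t(y)$, using the left-invariance of the Haar measure on $S$ (which yields $\|f(y\cdot)\|_p = \|f\|_p$ for every fixed $y$) and the fact that $\sigma_t$ has total mass $1$, disposes of these cases. The crucial case is $p = 2$, where the target is $\|M_tf\|_2 \le \phi_0(a_t)\|f\|_2$. The argument proceeds through the Fourier-multiplier identity $\wtilde{M_tf}(\lambda,n) = \phi_\lambda(a_t)\wtilde f(\lambda,n)$ for real $\lambda$: unfolding the convolution, making the left-invariant substitution $u = yx$ in the inner Haar integral, and using $\PP_\lambda(y^{-1}u,n) = (\pi_{-\lambda}(y^{-1})\PP_\lambda(u,\cdot))(n)$ together with property~(5) of $R$ (and inversion symmetry of $\sigma_t$) reduces the averaged kernel $\int \PP_\lambda(y^{-1}u,n)\,d\sigma_t(y)$ to $\phi_\lambda(a_t)\PP_\lambda(u,n)$. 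Combining this with the Harish--Chandra type bound $|\phi_\lambda(a_t)| \le \phi_0(a_t)$ for real $\lambda$ and Plancherel yields the estimate.

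For $p \in (1,2) \cup (2,\infty)$, Riesz--Thorin is not tight enough: interpolating the $p = 1$ and $p = 2$ estimates only produces $\phi_0(a_t)^{2/p'}\|f\|_p$, which differs from $\phi_{i\gamma_p\rho}(a_t)$ by a polynomial factor in $t$ (compare the asymptotics $\phi_0(a_t) \sim t\,e^{-\rho t}$ and $\phi_{i\gamma_p\rho}(a_t) \sim c\,e^{-2\rho t/p'}$). The polynomial gap must be closed by Stein's complex interpolation applied to an analytic family $\{T_z\}_{0 \le \Re z \le 1}$ for which $T_{1/p}$ returns $M_t$ and whose $z$-dependence absorbs exactly the difference between $\phi_0$ and $\phi_{i\gamma_p\rho}$; a natural candidate has the form $T_zf = \phi_{i(1-2z)\rho}(a_t)^{-1}M_tf$, possibly twisted on the $N$-side by a complex power of the Poisson kernel $P_1^{(1-z)/2}$, so that on each boundary line of the strip the normalization matches the available $L^1$ or $L^2$ bound exactly. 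Together with the Helgason--Johnson-type majoration $|\phi_\lambda(x)| \le \phi_{i\eta\rho}(x)$ for $|\Im \lambda| \le \eta\rho$, the boundary estimates feed into Stein's theorem to produce the sharp constant $\phi_{i\gamma_p\rho}(a_t)$ inside the strip. The range $p \in (2,\infty)$ then follows from $p \in (1,2)$ by duality, using the (essential) self-adjointness of $M_t$ together with $\phi_{i\gamma_{p'}\rho} = \phi_{i\gamma_p\rho}$. The main technical obstacle is engineering the analytic family so that Stein interpolation returns $\phi_{i\gamma_p\rho}(a_t)$ on the nose rather than the weaker $\phi_0(a_t)^{2/p'}$.
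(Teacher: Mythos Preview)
Your proposal has a genuine gap: the Stein interpolation step is not carried out, and you explicitly flag ``engineering the analytic family'' as the main unresolved obstacle. A sketch that ends with the hard step still open is not a proof. Moreover, the candidate family $T_z = \phi_{i(1-2z)\rho}(a_t)^{-1}M_t$ is just a scalar multiple of $M_t$, so on the boundary line $\Re z=0$ you would need uniform control of $|\phi_{(2y+i)\rho}(a_t)|^{-1}$ in $y\in\R$; this is not immediate and at best introduces an extra constant, so even if completed your approach would not obviously yield the constant ``on the nose'' as stated.

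More importantly, you have the convolution on the wrong side. On $S$ one has $M_tf(x)=R({}^x\!f)(t)=\int_{S_t} f(xy)\,d\sigma_t(y)$, not $\int_{S_t} f(yx)\,d\sigma_t(y)$. This matters because $S$ is nonunimodular: left translations are isometries of $L^p$, but right translations are not, and it is precisely the modular factor that produces the sharp constant. With the correct formula, Minkowski's integral inequality gives
\[
\|M_tf\|_p \le \int_{S_t}\|f(\cdot\,y)\|_p\,d\sigma_t(y)
= \int_{S_t} e^{2\rho A(y)/p}\,d\sigma_t(y)\,\|f\|_p,
\]
using the Jacobian $\int_S|f(xy)|^p\,dx = e^{2\rho A(y)}\|f\|_p^p$ (item~(b) in Section~2). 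Since $1/p = \tfrac12 - i(i\gamma_p\rho)/Q$, the sphere average $\int_{S_t} e^{2\rho A(y)/p}\,d\sigma_t(y)$ is exactly $R(e^{(i\lambda-\rho)A(\cdot)})(t)=\phi_{i\gamma_p\rho}(a_t)$ by property~(5). This is the entire proof in the paper: one application of Minkowski, valid simultaneously for every $p\in[1,\infty]$, with no interpolation, no Plancherel, and no analytic families. Your detour through $p=2$ and Stein interpolation is unnecessary because the sharp constant is already sitting in the Jacobian of right translation.
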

\begin{proof} For convenience let us denote the variable point $(X, Y, a_s)$ in the integration defining radialization $R$ simply by $P$. We recall that $M_t(f)(x)=R(\,^x\!f)(t)$.
\begin{eqnarray*}
&&\left(\int_S|M_tf(x)|^pdx\right)^{1/p}\\
&=&\left(\int_S\left |\int_{|s|<t}\int_{T_{t,s}}\,^x\!f(P)dw_{t,s}\,e^{-2\rho s}ds\right |^pdx\right)^{1/p}\\
&\le&\int_{|s|<t}\int_{T_{t,s}}\left(\int_S \,|^x\!f(P)|^pdx\right)^{1/p}dw_{t,s}\,e^{-2\rho s}ds\\
&=&\int_{|s|<t}\int_{T_{t,s}}\left(\int_S \,|f(x)|^p e^{2\rho s}dx\right)^{1/p}dw_{t,s}\,e^{-2\rho s}ds\\
&=&\|f\|_p \int_{|s|<t}\int_{T_{t,s}}e^{2\rho s/p}\, dw_{t,s}\,e^{-2\rho s}ds\\
&=& \|f\|_p\, \phi_{i\gamma_p\rho}(a_t).
\end{eqnarray*}
In the last step we have used that $e^{2\rho s/p}=e^{2\rho A(P)/p}$ and as $1/p=(1/2-i(i\gamma_p\rho)/2\rho)$, $R(e^{2\rho A(\cdot)/p})(t)=\phi_{i\gamma_p\rho}(a_t)$ (see Section 2).
\end{proof}
Since for $t>0$  $\phi_{i\gamma_p\rho}(a_t)\asymp e^{-(2\rho/p')t}$ for $1\le p\le 2$ and $\phi_{i\gamma_{p'}\rho}=\phi_{i\gamma_p\rho}$, we have from above $\|M_tf\|_{op}\le e^{-(2\rho/p')t}$ or
$\le e^{-(2\rho/p)t}$ depending on $p\le 2$ or $p>2$. Here $\|M_tf\|_{op}$ is the operator norm of $M_t$ from $L^p(S)$ to $L^p(S)$.
The proof of the  proposition above for symmetric space is given in Section 6.

Using interpolation (\cite[p.197]{S-W}) we have $$\|M_tf\|^\ast_{p,s}\le C_{p,s}\|f\|^\ast_{p,s}$$ for $p\in (1, \infty)$, $s\in [1,\infty]$.

\begin{proposition} For $f\in L^1(S)$ $M_tf$ converges to $f$ in $L^1$ as $t\rightarrow 0$.
Also for all $f\in L^{p,q}(S), 1<p<\infty$, $1\le q\le \infty$ $M_tf$ converges to $f$ in $L^p$ as $t\rightarrow 0$.
\end{proposition}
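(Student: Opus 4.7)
The plan is the classical density plus uniform boundedness argument, using the operator norm estimate just established.

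First, I would observe that the previous proposition together with the interpolation remark gives the uniform bound $\|M_t\|_{L^{p,q}\to L^{p,q}} \le C$ for all $t$ in a bounded neighborhood of $0$ (since $\phi_{i\gamma_p\rho}$ is continuous and equals $1$ at the identity). This is the essential ingredient that upgrades pointwise/dense convergence to norm convergence.

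Next, I would prove convergence on a dense subset. The natural choice is $C_c(S)$, which is dense in $L^1(S)$ and in $L^{p,q}(S)$ for $1\le q<\infty$. Using the convolution identity $M_t f(x) = f*\sigma_t(x)$ together with left-invariance of the metric, one sees that $M_tf(x)$ is the average of $f$ over the geodesic sphere of radius $t$ centered at $x$. For $f\in C_c(S)$, uniform continuity of $f$ gives $M_tf\to f$ uniformly on $S$ as $t\to 0$. Moreover, $\operatorname{supp}(M_tf)$ is contained in the $t$-neighborhood of $\operatorname{supp}(f)$, which is a fixed bounded set for $t\le 1$, say. Uniform convergence on a fixed bounded set immediately gives convergence in every $L^p$ norm, and hence in every $L^{p,q}$ norm (either directly from the definition of the Lorentz quasi-norm on a bounded-support family, or by noting that on functions supported in a fixed compact set the $L^p$ and $L^{p,q}$ topologies are comparable up to constants).

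Finally, I would assemble the proof by the standard three-epsilon argument. Given $f$ in the target space and $\varepsilon>0$, pick $g\in C_c(S)$ with $\|f-g\|<\varepsilon$ in the relevant norm; then
\begin{equation*}
\|M_tf-f\|\le \|M_t(f-g)\|+\|M_tg-g\|+\|g-f\|\le (C+1)\varepsilon+\|M_tg-g\|,
\end{equation*}
and the middle term tends to $0$ as $t\to 0$ by the previous step.

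The one genuine obstacle is the case $q=\infty$, since $C_c(S)$ is not dense in $L^{p,\infty}(S)$ and, moreover, $L^{p,\infty}\not\subset L^p$. I would handle this either by interpreting the statement as convergence in $L^{p,q}$ for $q<\infty$ (so that density applies), or by restricting to the closure of $C_c(S)$ in $L^{p,\infty}$; for a general $f\in L^{p,\infty}$ one cannot expect norm convergence and should settle for weak-$*$ convergence, which follows from the same uniform bound by testing against $C_c$-functions. This is the only delicate point; the rest of the argument is a routine application of the boundedness of $M_t$ obtained in the preceding proposition.
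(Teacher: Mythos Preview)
Your density-plus-uniform-boundedness argument is correct and coincides with the paper's treatment of the $L^1$ (and more generally $L^p$) case: the paper simply says ``a standard argument involving dominated convergence theorem and approximation by functions in $C_c^\infty(S)$'', which is exactly your three-epsilon scheme with compactly supported approximants.

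For the Lorentz case, however, the paper takes a genuinely different route. Rather than appealing to density of $C_c(S)$ in $L^{p,q}(S)$ together with the interpolated bound $\|M_t\|_{L^{p,q}\to L^{p,q}}\le C$ (which, as you correctly point out, runs into trouble at $q=\infty$), the paper invokes from \cite{Ray-Sarkar} a decomposition: every $f\in L^{p,q}(S)$ can be written as $f=f_1+f_2$ with $f_1\in L^1(S)$ and $f_2\in L^r(S)$ for some $r\in(p,2]$. The already-established Lebesgue case then gives $M_tf_1\to f_1$ in $L^1$ and $M_tf_2\to f_2$ in $L^r$, and the Lorentz assertion follows by adding. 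This device is uniform in $q$, so it handles $q=\infty$ without any density hypothesis and resolves precisely the obstacle you flagged. Your approach, on the other hand, is more transparent about the norm in which convergence takes place and avoids importing the decomposition lemma; it is the cleaner argument when $q<\infty$.
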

A standard argument involving dominated convergence theorem and approximation by functions in $C_c^\infty(S)$ proves the result for $L^p$-spaces.  If $f\in L^{p,q}(S)$ with $p, q$ as above, then there exists $f_1\in L^1(S), f_2\in L^r(S)$ with $r\in (p,2]$ such that $f=f_1+f_2$ (see \cite{Ray-Sarkar}). Use of this decomposition gives the result for Lorentz spaces.

\begin{proposition}
For  $f \in L^p(S), 1\le p<2$, $(M_tf)\wtilde{\,\,}(\lambda, n)=\wtilde{f}(\lambda, n)\phi_\lambda(t)$.
\end{proposition}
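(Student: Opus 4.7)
The plan is to compute $\wtilde{M_t f}(\lambda, n)$ by exploiting the convolution representation $M_t f = f * \sigma_t$, where $\sigma_t$ is the normalized (radial) surface measure on the geodesic sphere $S_t$ of radius $t$, and reducing the calculation to a ``mean value'' identity for the Poisson kernel $\mathcal{P}_\lambda(\cdot, n)$. Starting from
$$\wtilde{M_t f}(\lambda, n) = \int_S (f * \sigma_t)(x)\,\mathcal{P}_\lambda(x, n)\, dx,$$
expanding the convolution, applying Fubini, and substituting $z = y^{-1}x$ (using left-invariance of the Haar measure), I would reach
$$\wtilde{M_t f}(\lambda, n) = \int_S f(y)\left[\int_S \mathcal{P}_\lambda(yz, n)\, d\sigma_t(z)\right] dy.$$

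The heart of the argument is then the identity
$$\int_S \mathcal{P}_\lambda(yz, n)\, d\sigma_t(z) = \phi_\lambda(t)\,\mathcal{P}_\lambda(y, n),$$
which generalizes property (5) of Section 2 from $y = e$ to arbitrary $y \in S$. To see it, note that $\mathcal{P}_\lambda(\cdot, n)$ is an eigenfunction of the Laplace-Beltrami operator $\mathcal{L}$ with eigenvalue $-(\lambda^2 + \rho^2)$; since $\mathcal{L}$ is left-invariant, so is the translated function $z \mapsto \mathcal{P}_\lambda(yz, n)$. Integrating any function against the radial probability measure $\sigma_t$ amounts to evaluating its radialization $R(\cdot)$ at $a_t$, and on a harmonic Damek-Ricci space the radialization of an $\mathcal{L}$-eigenfunction equals its value at $e$ times $\phi_\lambda$. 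Applied here this gives the identity, since the translate $z \mapsto \mathcal{P}_\lambda(yz, n)$ takes the value $\mathcal{P}_\lambda(y, n)$ at $z = e$.

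Combining, one obtains
$$\wtilde{M_t f}(\lambda, n) = \phi_\lambda(t)\int_S f(y)\,\mathcal{P}_\lambda(y, n)\, dy = \wtilde{f}(\lambda, n)\,\phi_\lambda(t),$$
first for $f \in C_c^\infty(S)$, where all Fubini interchanges are immediate because $\sigma_t$ has compact support and $\mathcal{P}_\lambda$ is continuous. The passage to $f \in L^p(S)$, $1 \le p < 2$, would be by density: $M_t$ is $L^p$-bounded by Proposition 4.1, so $M_tf \in L^p(S)$, and $\wtilde{f}(\lambda, n)$ depends continuously on $f \in L^p(S)$ for $\lambda \in S_p$ by Theorems 3.1 and 3.2. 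The main obstacle is the key identity itself: while heuristically transparent as a mean value formula, its precise justification on the nonunimodular solvable group $S$ rests on the harmonic property of Damek-Ricci spaces, which ensures that the radialization of any translate of an $\mathcal{L}$-eigenfunction remains proportional to $\phi_\lambda$.
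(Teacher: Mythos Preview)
Your argument is correct and takes a genuinely different route from the paper. You exploit the convolution form $M_tf=f\ast\sigma_t$, Fubini, and the left-invariance of Haar measure to reduce everything to the single mean-value identity
\[
\int_{S}\PP_\lambda(yz,n)\,d\sigma_t(z)=\phi_\lambda(t)\,\PP_\lambda(y,n),
\]
which you then justify by the general harmonic-space principle that the spherical average of any $\mathcal L$-eigenfunction equals its value at the centre times $\phi_\lambda$. The paper, by contrast, never invokes this general principle: it works with the explicit description of the radialization operator via the hypersurfaces $T_{t,s}$, first establishes the representation-theoretic identity $\int_S f(xy)\PP_\lambda(x,n_1)\,dx=(\pi_{-\lambda}(f)\PP_\lambda(y^{-1},\cdot))(n_1)e^{2\rho A(y)}$, and then performs a chain of substitutions involving $\pi_{-\lambda}$ to shift the $y$-dependence from the argument of $\PP_\lambda$ into the $N$-variable, finally reducing to the special case $R(\PP_\lambda(\cdot,n'))(t)=\PP_\lambda(e,n')\phi_\lambda(t)$ (property~(5) of Section~2). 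Your approach is shorter and more conceptual, but it imports a fact (the mean-value property for arbitrary eigenfunctions on Damek--Ricci spaces) that the paper does not state; the paper's computation stays strictly within the tools listed in the preliminaries at the cost of more bookkeeping. One small correction: the continuity of $f\mapsto\wtilde f(\lambda,n)$ needed for your density step follows from the restriction estimates (Theorem~3.3) rather than from Theorems~3.1--3.2, which concern existence and regularity in $\lambda$.
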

\begin{proof}
We note that
\begin{eqnarray}
\int_Sf(xy)\PP_\lambda(x, n_1)dx&=& \int_Sf(xy)(\pi_{-\lambda}(x)P_1^{1/2-i\lambda/Q})(n_1)dx\nonumber\\
&=&\int_Sf(z)(\pi_{-\lambda}(zy^{-1})P_1^{1/2-i\lambda/Q})(n_1)e^{2\rho A(y)}dz\nonumber\\
&=&\int_Sf(z)(\pi_{-\lambda}(z)(\pi_{-\lambda}(y^{-1})P_1^{1/2-i\lambda/Q}))(n_1)dz\, e^{2\rho A(y)}\nonumber\\
&=&(\pi_{-\lambda}(f)\PP_\lambda(y^{-1}, \cdot))(n_1)e^{2\rho A(y)}.
\label{HFT-right-translation}
\end{eqnarray}

In the  computations below we shall use the following substitutions in different steps:
$y=na_s$ and $x=n_2a_r$, $n_3=a_{-s}n^{-1}a_s$, $n'=a_{-r}n_2^{-1}n_1a_r$.  We shall also use the fact that
  $R\PP_\lambda(\cdot, n)(t)=\PP_\lambda(e, n)\phi_\lambda(t)$ (see section 2).

\begin{eqnarray*}
(M_tf)\wtilde{\,\,}(\lambda, n_1))&=&\int_SM_tf(x)\mathcal P_\lambda(x, n_1)dx\\
&=&\int_{|s|\le t}\int_{T_{t,s}}\int_S f(xy)\PP_\lambda(x, n_1)dx dw_{t,s}(n)e^{-2\rho s}ds\\
&=&\int_{|s|\le t}\int_{T_{t,s}}\left[(\pi_{-\lambda}(f)\PP_\lambda(y^{-1}, \cdot))(n_1)e^{2\rho s}\right ]dw_{t,s}(n)e^{-2\rho s}ds\\
&=&\int_{|s|\le t}\int_{T_{t,s}}\left [\int_S f(x)(\pi_{-\lambda}(x)\PP_\lambda(y^{-1}, \cdot))(n_1)e^{2\rho s}\, dx\right] dw_{t,s}(n)e^{-2\rho s}ds\\
&=&\int_{|s|\le t}\int_{T_{t,s}}\left [\int_{N\times\R} f(n_2a_r)\PP_\lambda(y^{-1}a_{-r}n_2^{-1}n_1a_r)e^{r(i\lambda-\rho)}e^{2\rho s}dx\right] dw_{t,s}(n)e^{-2\rho s}ds\\
&=&\int_Sf(x)\left[\int_{|s|\le t}\int_{T_{t,s}}\PP_\lambda(y^{-1}n')e^{2\rho s}dw_{t,s}(n)e^{-2\rho s}ds\right]e^{r(i\lambda-\rho)}\, e^{-2\rho r} dn_2\,dr\\
&=&\int_{N\times\R}f(n_2a_r)\left[\int_{|s|\le t}\int_{T_{t,s}}\PP_\lambda(n_3a_{-s},n')e^{4\rho s}dw_{t,s}(n_3)e^{-2\rho s}ds\right]
e^{r(i\lambda-\rho)}\, e^{-2\rho r} dn_2\,dr\\
&=&\int_{N\times\R}f(n_2a_r)\left[R(\PP_\lambda(\cdot, n'))(t)\right]
e^{r(i\lambda-\rho)}\, e^{-2\rho r} dn_2\,dr\\
&=&\int_{N\times\R}f(n_2a_r)\PP_\lambda(e, n')\phi_\lambda(t)e^{r(i\lambda-\rho)}\, e^{-2\rho r} dn_2\,dr\\
&=&\int_{N\times\R}f(n_2a_r)(\pi_{-\lambda}(x)P_1^{1/2-i\lambda/Q})(n_1)dx \phi_\lambda(t)\text{ (by \ref{HFT-right-translation})}\\
&=&\wtilde{f}(\lambda, n_1)\phi_\lambda(t).
\end{eqnarray*}
\end{proof}

Following Bray we define {\em spherical modulus of continuity} for any $1\le p,q\le \infty$ as
$$\Omega_{p,q}[f](r)=\sup_{0<t\le r}\|M_tf-f\|^\ast_{p,q}.$$ We note that $\Omega_{p,p}$ is the same as $\Omega_p$.
We shall  quote two lemmas from \cite{BP}. Let $j_\alpha$ be the usual Bessel function of the first kind normalized by $j_\alpha(0)=1$.
\begin{lemma} For $\alpha>-1/2$
$$C_{1, \alpha}\min\left\{1,\left(\frac \lambda r\right)^2\right\}\le \int_0^1\left(1-j_\alpha\left(\frac{\lambda z}r\right)\right)dz\le \sup_{0\le z\le 1}\left(1-j_\alpha\left(\frac{\lambda z}r\right)\right)\le  C_{2, \alpha}\min\left\{1,\left(\frac \lambda r\right)^2\right\}$$ for two positive constants  $C_{1, \alpha}$ and  $C_{2, \alpha}$.
\label{BP-lemma-1}
\end{lemma}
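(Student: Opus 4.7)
Writing $u = \lambda/r$ and using that $j_\alpha$ is even, we may assume $u \ge 0$ (and the case $u = 0$ is trivial). The middle inequality is immediate, so the plan is to prove matching bounds of order $\min\{1, u^2\}$ for the integral from below and for the supremum from above, splitting in each case on $u \le 1$ versus $u \ge 1$.

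For the upper bound on $\sup_{z \in [0, 1]}(1 - j_\alpha(uz))$: when $u \le 1$, the Taylor expansion
\begin{equation*}
j_\alpha(x) = 1 - \frac{x^2}{4(\alpha + 1)} + O(x^4),
\end{equation*}
valid uniformly for $x \in [0, 1]$ since $j_\alpha$ is entire and even, gives $1 - j_\alpha(uz) \le C_\alpha (uz)^2 \le C_\alpha u^2$ uniformly in $z$. When $u \ge 1$, the classical bound $|j_\alpha(x)| \le 1$ (valid for $\alpha > -1/2$, coming from the integral representation of $j_\alpha$ as the Fourier transform of the probability measure $c_\alpha(1 - t^2)^{\alpha - 1/2}\,dt$ on $[-1, 1]$) yields $1 - j_\alpha \le 2 = 2\min\{1, u^2\}$.

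The lower bound requires more care. Substituting $x = uz$ rewrites the integral as
\begin{equation*}
G(u) := \int_0^1 (1 - j_\alpha(uz))\, dz = \frac{1}{u}\int_0^u (1 - j_\alpha(x))\, dx,
\end{equation*}
and we analyze $G$ in three regimes. Near $u = 0$, the Taylor expansion above integrates to $G(u) = \frac{u^2}{12(\alpha + 1)} + O(u^4)$, so $G(u) \ge c_1 u^2$ on some interval $[0, \delta]$. As $u \to \infty$, the standard asymptotic $j_\alpha(x) = O(x^{-\alpha - 1/2})$ together with $|j_\alpha| \le 1$ forces $\frac{1}{u}\int_0^u j_\alpha(x)\,dx \to 0$, hence $G(u) \to 1$ and $G(u) \ge 1/2$ for $u$ beyond some threshold $U$. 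On the compact interval $[\delta, U]$ the function $G$ is continuous and strictly positive, because $j_\alpha(x) < 1$ for every $x \ne 0$ (again from the integral representation: $\cos(xt)$ cannot equal $1$ for almost every $t \in [-1, 1]$ unless $x = 0$), so $G$ is bounded below there by a positive constant. Combining the three ranges yields $G(u) \ge C_{1, \alpha}\min\{1, u^2\}$ for all $u \ge 0$.

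The only nonmechanical step is the positive lower bound for $G$ on the intermediate range $[\delta, U]$, which rests on the strict inequality $j_\alpha(x) < 1$ for $x \ne 0$; this is the main conceptual input. Everything else is routine bookkeeping between the Taylor expansion and the standard Bessel asymptotic.
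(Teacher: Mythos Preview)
The paper does not actually prove this lemma: it simply quotes it from Bray--Pinsky \cite{BP} (``We shall quote two lemmas from \cite{BP}''), so there is no in-paper argument to compare against. Your proof is correct and self-contained. The reduction to $u=\lambda/r\ge 0$, the Taylor expansion near the origin, the bound $|j_\alpha|\le 1$ from the probability-measure representation (valid precisely because $\alpha>-1/2$), and the compactness argument on the intermediate range are all sound. One small remark: your claim that $\tfrac{1}{u}\int_0^u j_\alpha(x)\,dx\to 0$ follows already from the crude size bound $|j_\alpha(x)|=O(x^{-\alpha-1/2})$ for large $x$ together with $|j_\alpha|\le 1$ near $0$, since $\tfrac{1}{u}\int_1^u x^{-\alpha-1/2}\,dx\to 0$ whenever $\alpha+1/2>0$; no oscillatory cancellation is needed, so the argument is as elementary as you indicate.
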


\begin{lemma} Let $\alpha\ge \beta\ge -1/2$, $t_0>0$ and $|\eta|\le \rho$. Then for all $0\le t\le t_0$,
$$|1-\phi_{\mu+i\eta}^{(\alpha, \beta)}(a_t)|\ge C|1-j_\alpha(\mu t)|$$ for some positive constant $C=C_{t_0, \alpha, \beta}$.  Consequently
$$\int_0^1\left|1-\phi_{\mu+i\eta}^{(\alpha, \beta)}\left(\frac zr\right)\right|dz\ge C\min\left\{1, \left(\frac\mu r\right)^2\right\}.$$
\label{BP-lemma-2}
\end{lemma}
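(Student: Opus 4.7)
I would prove the pointwise inequality
$|1-\phi_{\mu+i\eta}^{(\alpha,\beta)}(a_t)|\ge C|1-j_\alpha(\mu t)|$
by splitting according to the size of $\mu t$; the integral consequence then follows by substituting $t=z/r$ (with $r\ge 1/t_0$ so that $[0,1/r]\subset[0,t_0]$) and applying Lemma \ref{BP-lemma-1} to the nonnegative integrand $1-j_\alpha(\mu z/r)$.

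For small $\mu t$, I would use the Taylor expansion at $t=0$. From the hypergeometric representation $\phi_\lambda^{(\alpha,\beta)}(t)={}_2F_1((\rho+i\lambda)/2,(\rho-i\lambda)/2;\alpha+1;-\sinh^2 t)$ one obtains
\begin{equation*}
\phi_\lambda^{(\alpha,\beta)}(t)=1-\frac{\lambda^2+\rho^2}{4(\alpha+1)}t^2+O\bigl((\lambda t)^4\bigr)
\end{equation*}
uniformly for bounded $\lambda t$. For $\lambda=\mu+i\eta$ with $|\eta|\le\rho$, a direct computation gives $|\lambda^2+\rho^2|^2=(\mu^2-\eta^2+\rho^2)^2+4\mu^2\eta^2\ge\mu^4$, so for $\mu t\le\epsilon_0$ (with $\epsilon_0$ small in terms of $\alpha,\beta$) the leading term dominates and $|1-\phi_{\mu+i\eta}(a_t)|\ge c\mu^2 t^2$. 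The analogous expansion $j_\alpha(x)=1-x^2/(4(\alpha+1))+O(x^4)$ yields $|1-j_\alpha(\mu t)|\le C(\mu t)^2$, and the desired inequality holds in this regime.

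For $\mu t\ge\epsilon_0$ with $t\le t_0$, the quantity $|1-j_\alpha(\mu t)|$ is bounded, so it suffices to produce a uniform positive lower bound for $|1-\phi_{\mu+i\eta}(a_t)|$. Here I would use the Harish-Chandra/Flensted-Jensen $c$-function expansion of $\phi_\lambda(a_t)$ together with the estimate $|c(\mu+i\eta)|\asymp(1+|\mu|)^{-\alpha-1/2}$, uniform in $|\eta|\le\rho$, to conclude $|\phi_{\mu+i\eta}(a_t)|\le C|\mu|^{-\alpha-1/2}$ whenever $t$ stays in a compact subinterval of $(0,t_0]$ and $|\mu|$ is large, forcing $|1-\phi_{\mu+i\eta}(a_t)|\ge 1/2$.

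The main obstacle will be patching these two regimes together uniformly, in particular the transitional zone where $\mu\to\infty$ and $t\to 0$ simultaneously while $\mu t$ stays bounded. The $c$-function asymptotic is not uniform down to $t=0$, so one needs the classical limit $\phi_\mu^{(\alpha,\beta)}(s/\mu)\to j_\alpha(s)$ (uniform on compact sets in $s$) to bridge the gap, combined with a compactness argument on the residual bounded piece of $(\mu,t)$ where neither asymptotic applies directly. Once the pointwise bound is established, the integral statement follows at once: substitution yields $\int_0^1|1-\phi_{\mu+i\eta}(a_{z/r})|dz\ge C\int_0^1(1-j_\alpha(\mu z/r))dz$, and the right side is at least $C'\min\{1,(\mu/r)^2\}$ by Lemma \ref{BP-lemma-1}.
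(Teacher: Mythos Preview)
The paper does not prove this lemma: it is explicitly quoted from Bray--Pinsky \cite{BP} (see the sentence ``We shall quote two lemmas from \cite{BP}'' immediately preceding the statement), so there is no in-paper argument to compare against. Your sketch --- Taylor expansion for small $\mu t$, $c$-function asymptotics for large $\mu$ with $t$ bounded away from $0$, and the Mehler--Heine-type limit $\phi_\mu^{(\alpha,\beta)}(s/\mu)\to j_\alpha(s)$ to handle the transitional zone --- is the standard route and is essentially how the result is obtained in \cite{BP}; the integral consequence then follows exactly as you say via Lemma~\ref{BP-lemma-1}.

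One caution on your small-$\mu t$ step: the error term in the hypergeometric expansion is not $O((\lambda t)^4)$ uniformly just in $\lambda t$ --- the coefficients $(a)_n(b)_n$ involve lower powers of $\lambda$ as well, so one really gets $O((1+|\lambda|^2)^2 t^4)$ on $t\le t_0$. This still yields $|1-\phi_{\mu+i\eta}(a_t)|\ge c\mu^2 t^2$ when $\mu t$ is small \emph{and} $\mu$ is large, but for $\mu$ bounded and $t$ small you need the separate observation that the leading term $(\lambda^2+\rho^2)t^2/(4(\alpha+1))$ has real part bounded below by $c' t^2\ge c''(\mu t)^2$ (since $|\eta|\le\rho$ forces $\Re(\lambda^2+\rho^2)\ge 0$, and in fact $|\lambda^2+\rho^2|\ge\rho^2-\eta^2+\mu^2\ge\mu^2$ is not quite what you wrote --- recheck the algebra there). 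This is a minor patch, not a structural gap, and your acknowledgment of the compactness argument for the residual region covers the rest.
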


\section{Growth of Fourier transform and Moduli of continuity}
We offer the following modification of two main Theorems in \cite{BP} mentioned in the introduction.
\begin{theorem}\label{bray-thm-restriction}
 Let $r\ge r_0>0$ be fixed.

{\em(a)} Let $q\in [1, \infty]$. Then for $f\in L^1(S)$ and $\lambda\in \R$
$$\sup_\lambda\left[\min\left\{1, \left(\frac{\lambda}r\right)^2\right\}\left(\int_N|\wtilde{f}(\lambda+i\gamma_q\rho,n)|^qdn\right)^{1/q}\right]\le C_q\Omega_1[f]\left(\frac 1r\right).$$

{\em(b)} Let $1<p<2, p<q<p'$. Then for $f\in L^{p, \infty}(S)$ and $\lambda\in \R$
$$\sup_\lambda\left[\min\left\{1, \left(\frac{\lambda}r\right)^2\right\}\left(\int_N|\wtilde{f}(\lambda+i\gamma_q\rho,n)|^qdn\right)^{1/q}\right]\le C_{p,q}\Omega_{p, \infty}[f]\left(\frac 1r\right).$$

{\em(c)} Let $1\le p<q\le 2$ and $f\in L^{p, \infty}(S)$. Then for $|\eta|<\gamma_p\rho$  and $\lambda\in \R$
$$\sup_\lambda\left[\min\left\{1, \left(\frac{\lambda}r\right)^2\right\}\left(\int_N|\wtilde{f}(\lambda+i\eta,n)|^qP_1(n)dn\right)^{1/q}\right]\le C_{p,q}\Omega_{p, \infty}[f]\left(\frac 1r\right).$$
\end{theorem}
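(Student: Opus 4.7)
The three parts follow a common scheme. Start from Proposition 4.3, which gives
$$
\wtilde{(M_tf - f)}(\lambda, n) \;=\; \wtilde f(\lambda, n)\bigl(\phi_\lambda(a_t) - 1\bigr),
$$
and hence, for any Banach function space $X$ on $N$,
$$
|1 - \phi_\lambda(a_t)|\,\|\wtilde f(\lambda,\cdot)\|_X \;=\; \|\wtilde{(M_tf-f)}(\lambda,\cdot)\|_X.
$$
The strategy is: (i) choose $X$ and an ambient space $Y$ so that a restriction theorem from Section 3 dominates the right-hand side by $\|M_tf - f\|_Y$; (ii) bound $\|M_tf - f\|_Y \le \Omega_Y[f](1/r)$ for $0<t\le 1/r$; (iii) substitute $t=z/r$, integrate $z\in(0,1)$, and apply Lemma \ref{BP-lemma-2} to extract the factor $\min\{1,(\lambda/r)^2\}$.

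For part (a), apply the first inequality of Theorem \ref{restriction-thm-1} to $M_tf - f \in L^1(S)$ at height $\gamma_q\rho$; this gives
$$
\bigl|1 - \phi_{\lambda+i\gamma_q\rho}(a_t)\bigr|\Bigl(\int_N|\wtilde f(\lambda+i\gamma_q\rho,n)|^q\,dn\Bigr)^{1/q}
\;\le\; \|M_tf-f\|_1 \;\le\; \Omega_1[f](1/r)
$$
for every $0<t\le 1/r$. Part (b) is identical, except one invokes the Lorentz inequality of Theorem \ref{restriction-thm-1} and replaces $\Omega_1[f]$ by $\Omega_{p,\infty}[f]$; the boundedness of $M_t$ on $L^{p,\infty}(S)$ (the interpolated consequence of Proposition 4.1 stated right after its proof) ensures $M_tf - f\in L^{p,\infty}(S)$. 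For part (c) use Corollary \ref{cor-restriction}(a) in place of Theorem \ref{restriction-thm-1}: since $|\eta|<\gamma_p\rho$, pick an auxiliary $q'\in(p,2]$ with $q\le q'$ and $|\eta|\le\gamma_{q'}\rho$, so that $\lambda+i\eta\in\mathcal S_{q'}$ and the target $L^q(N,P_1)$ norm is controlled by the $L^{q'}(N,P_1)$ norm produced by the corollary, using that $(N,P_1\,dn)$ is a probability space so that $L^{q'}\hookrightarrow L^q$.

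Now substitute $t=z/r$ in the resulting inequality and integrate in $z\in(0,1)$. The Damek-Ricci parameters $\alpha=(m+k-1)/2$ and $\beta=(k-1)/2$ satisfy $\alpha\ge\beta\ge-1/2$, and the imaginary part satisfies $|\eta|\le\rho$ in all three parts (trivially, since $|\gamma_q|\le 1$ in (a), (b) and $|\eta|<\gamma_p\rho\le\rho$ in (c)), so Lemma \ref{BP-lemma-2} applies and yields
$$
\int_0^1\bigl|1-\phi_{\lambda+i\eta}(a_{z/r})\bigr|\,dz \;\ge\; C\min\bigl\{1,(\lambda/r)^2\bigr\}.
$$
Dividing by this positive quantity and taking $\sup_{\lambda\in\R}$ delivers each of (a), (b), (c).

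The main obstacle is the strip bookkeeping in part (c): one must exhibit an admissible $q'$ so that Corollary \ref{cor-restriction}(a) produces a norm on $(N,P_1)$ that dominates $\|\cdot\|_{L^q(N,P_1)}$ over the full strip $|\eta|<\gamma_p\rho$. Once that choice is made, the three parts run in parallel, the essential analytic inputs being the spherical mean identity of Proposition 4.3 and the lower bound of Lemma \ref{BP-lemma-2}.
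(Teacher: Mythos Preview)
Your approach is exactly what the paper indicates: apply the restriction estimates of Theorem~\ref{restriction-thm-1} and Corollary~\ref{cor-restriction}(a) to $M_tf-f$, invoke Proposition~4.3 to factor out $|1-\phi_\lambda(a_t)|$, and then run the Bray--Pinsky averaging in $t=z/r$ together with Lemma~\ref{BP-lemma-2}. Parts (a) and (b) are carried out correctly.

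There is, however, a slip in your ``strip bookkeeping'' for part (c). You claim one can choose $q'\in(p,2]$ with $q\le q'$ \emph{and} $|\eta|\le\gamma_{q'}\rho$. But $\gamma$ is decreasing, so $q\le q'$ forces $\gamma_{q'}\le\gamma_q$, and hence such a $q'$ exists only when $|\eta|\le\gamma_q\rho$, not on the full range $|\eta|<\gamma_p\rho$ asserted in the statement. In fact no detour through an auxiliary $q'$ is needed: apply Corollary~\ref{cor-restriction}(a) directly with the corollary's parameters equal to the theorem's $q$ (take $r=q$ there), which already yields the $L^q(N,P_1)$ bound uniformly on $\mathcal S_q$. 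If the full strip $|\eta|<\gamma_p\rho$ is really intended, the constant must be allowed to depend on $\eta$ (choose the corollary's $q$ so that $\gamma_{q}\rho\ge|\eta|$, obtaining an $L^r(N,P_1)$ bound with $r\le q$ only when $|\eta|>\gamma_q\rho$); otherwise the clean uniform statement is for $|\eta|\le\gamma_q\rho$. Either way, the embedding $L^{q'}(N,P_1)\hookrightarrow L^q(N,P_1)$ you invoke goes the wrong direction for the range where your $q'$ fails to exist.
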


The proofs follow from Theorem \ref{restriction-thm-1}, Corollary \ref{cor-restriction} (a) and  the arguments  of the corresponding result in \cite{BP}. We omit it for brevity. One can also prove similar results using part (b) and (c) of Corollary \ref{cor-restriction}.

We need the following lemma.
\begin{lemma} Let $1<p\le 2$ and $1\le q \le \infty$
Let $g$ be a nonnegative bounded continuous function on $[0,1]\times S$ and $f$ be a nonnegative function in $L^{p,q}(S)$. Then,
$$\left\|\left(\int_0^1g(t,\cdot))dt\right)f\right\|^\ast_{p,q}\le \sup_{[0,1]}\|g(t, \cdot)f\|^\ast_{p,q}.$$
\label{lem-Lorentz}
\end{lemma}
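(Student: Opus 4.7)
The plan is to read the inequality as an instance of Minkowski's integral inequality in the Banach space $L^{p,q}(S)$. Under the hypothesis $1<p\le 2$, $1\le q\le \infty$, the Lorentz quasinorm $\|\cdot\|^{\ast}_{p,q}$ is equivalent to a genuine Banach norm (the one built from the maximal rearrangement $h^{\ast\ast}(t)=t^{-1}\int_0^{t}h^{\ast}(s)\,ds$), so I may treat $L^{p,q}(S)$ as an ordinary Banach space and absorb any fixed equivalence constant into the definition of $\|\cdot\|^{\ast}_{p,q}$.

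First I verify that the map $t\mapsto g(t,\cdot)\,f$ is continuous from $[0,1]$ into $L^{p,q}(S)$. For $t_n\to t$, boundedness and continuity of $g$ give $|g(t_n,x)-g(t,x)|\,f(x)\to 0$ pointwise and the uniform domination $|g(t_n,\cdot)-g(t,\cdot)|\,f\le 2\|g\|_\infty f\in L^{p,q}(S)$; dominated convergence in the Lorentz norm then yields $\|(g(t_n,\cdot)-g(t,\cdot))f\|^{\ast}_{p,q}\to 0$. Consequently the Bochner integral $I=\int_0^{1}g(t,\cdot)f\,dt$ exists in $L^{p,q}(S)$, and by Fubini it coincides almost everywhere on $S$ with $\bigl(\int_0^{1}g(t,\cdot)\,dt\bigr)f$.

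Next, Minkowski's integral inequality in the Banach space $L^{p,q}(S)$ gives
$$\|I\|^{\ast}_{p,q}\;\le\;\int_0^{1}\|g(t,\cdot)f\|^{\ast}_{p,q}\,dt\;\le\;\sup_{t\in[0,1]}\|g(t,\cdot)f\|^{\ast}_{p,q},$$
the last step using that $[0,1]$ has unit length. This is precisely the claim.

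The main technical point is the vector-valued Minkowski inequality on $L^{p,q}$. Since $p>1$, the cleanest route is duality: one has the representation $\|h\|^{\ast}_{p,q}\asymp\sup\{|\int_{S}hk\,dx|:\|k\|^{\ast}_{p',q'}\le 1\}$ together with the Lorentz Hölder inequality $|\int_{S}hk\,dx|\le\|h\|^{\ast}_{p,q}\|k\|^{\ast}_{p',q'}$. Applying the representation to $h=I$, exchanging the order of integration via Fubini, and invoking Hölder on each slice produces the Minkowski bound; the case $q=\infty$ is handled by the same argument with $\|k\|^{\ast}_{p',1}$ on the dual side (or by a direct limit from $q<\infty$).
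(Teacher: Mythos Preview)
Your proof is correct and follows the same approach the paper indicates: the paper's own proof is the single sentence ``The proof uses a standard argument involving duality and Fubini's theorem,'' and your argument is precisely an unpacking of that---you represent the Lorentz norm by duality with $L^{p',q'}$, swap the $t$ and $x$ integrals by Fubini, and apply H\"older on each slice. The Bochner-integral and continuity discussion you include is extra scaffolding; once you invoke duality directly the pointwise definition of $\bigl(\int_0^1 g(t,\cdot)\,dt\bigr)f$ suffices, and the $q=\infty$ case is handled, as you note, because $L^{p',1}$ norms $L^{p,\infty}$ even though it is not the full dual.
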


The proof uses a standard  argument involving duality and Fubini's theorem.

\begin{theorem} \label{bray-thm-HY}
{\em (a)} Let $1\le p\le 2$
 and $p\le q\le p'$. Then for $f\in L^p(S)$
   $$\left(\int_\R\min\left\{1, \left(\frac{\lambda}r\right)^{2p'}\right\}\left(\int_N|\wtilde{f}(\lambda+i\gamma_q\rho,n)|^{q}dn\right)^{p'/q}|c(\lambda)|^{-2}d\lambda
   \right)\le C_{p,q}\Omega_{p}[f]\left(\frac 1r\right).$$

   {\em (b)} Let $1<p\le 2$, $p<q<p'$ and $1\le s \le \infty$. Then for $f\in L^{p,s}(S)$ and $\alpha\in [p',\infty]$,
   $$\left\|\min\left\{1, \frac{\cdot}{r}\right\}^2\left(\int_N|\wtilde{f}(\mathbf\cdot+i\gamma_q\rho,n)|^{q}dn\right)^{1/q}\right\|^\ast_{\alpha, s}
   \le C_{p,q,r,s}\Omega_{p,s}[f]\left(\frac 1r\right).$$

\end{theorem}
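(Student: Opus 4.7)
The plan is to exploit the spherical-mean Fourier multiplier identity from Proposition 4.3, namely $(M_tf)\wtilde{\,\,}(\lambda,n)-\wtilde f(\lambda,n)=(\phi_\lambda(a_t)-1)\,\wtilde f(\lambda,n)$, together with the Hausdorff-Young estimates of Theorem 3.4 applied to $M_tf-f$. This converts a norm estimate on $M_tf-f$ (which is controlled by $\Omega_{p,\cdot}[f](1/r)$) into a weighted bound on $|\wtilde f|$ with multiplier $|1-\phi_{\lambda+i\gamma_q\rho}(a_t)|$. Averaging in $t\in(0,1/r]$, i.e.\ setting $t=z/r$ with $z\in(0,1]$, and invoking Lemma 4.5 then manufactures the desired factor $\min\{1,(\lambda/r)^2\}$.

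For part (a), I apply Theorem 3.4(a) to $M_{z/r}f-f$, substitute the multiplier identity, raise both sides to the $p'$-th power and integrate in $z\in(0,1]$ by Fubini. This yields
\[
\int_\R\left(\int_0^1|1-\phi_{\lambda+i\gamma_q\rho}(a_{z/r})|^{p'}dz\right)\left(\int_N|\wtilde f|^qdn\right)^{p'/q}|c(\lambda)|^{-2}d\lambda\le C\,\Omega_p[f](1/r)^{p'}.
\]
Jensen's inequality on the inner $z$-average gives $\int_0^1|1-\phi|^{p'}dz\ge\left(\int_0^1|1-\phi|dz\right)^{p'}$, and the consequence recorded in Lemma 4.5 bounds the right-hand side below by $C\min\{1,(\lambda/r)^2\}^{p'}=C\min\{1,(\lambda/r)^{2p'}\}$; this gives (a), with the outer $1/p'$-th power produced automatically by the argument.

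For part (b), I apply the mixed-norm Hausdorff-Young of Theorem 3.4(b) to $M_{z/r}f-f$, which yields, uniformly in $z\in[0,1]$,
\[
\left\|\,|1-\phi_{\cdot+i\gamma_q\rho}(a_{z/r})|\left(\int_N|\wtilde f(\cdot+i\gamma_q\rho,n)|^qdn\right)^{1/q}\right\|^\ast_{\alpha,s}\le C\,\|M_{z/r}f-f\|^\ast_{p,s}.
\]
The consequence of Lemma 4.5 gives the pointwise lower bound $\min\{1,(\mu/r)^2\}\le C\int_0^1|1-\phi_{\mu+i\gamma_q\rho}(a_{z/r})|\,dz$. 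Multiplying by $(\int_N|\wtilde f|^qdn)^{1/q}$ and taking the $L^{\alpha,s}$-quasinorm in $\mu$, I pull the $z$-integral out of the Lorentz norm via Lemma 5.2 with $g(z,\mu)=|1-\phi_{\mu+i\gamma_q\rho}(a_{z/r})|$. This replaces the $z$-average with a supremum in $z\in[0,1]$, which by the Hausdorff-Young display is bounded by $C\sup_{0<t\le 1/r}\|M_tf-f\|^\ast_{p,s}=C\,\Omega_{p,s}[f](1/r)$.

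The principal technical point is the commutation in part (b) of the $z$-average with the Lorentz quasinorm; Lemma 5.2 is tailored exactly for this, provided $(z,\mu)\mapsto|1-\phi_{\mu+i\gamma_q\rho}(a_{z/r})|$ is nonnegative, continuous, and bounded on $[0,1]\times\R$, which follows from standard estimates on $\phi_\lambda$ on horizontal strips. In part (a) the corresponding book-keeping is routine, the only care being the choice $t=z/r$ so that Lemma 4.5 applies at the right scale and the $p'$-powers are tracked throughout.
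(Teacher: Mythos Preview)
Your proposal is correct and follows essentially the same route as the paper: apply the Hausdorff--Young estimate (Theorem~\ref{H-Y}) to $M_tf-f$, use the multiplier identity $(M_tf)\wtilde{\,\,}=\phi_\lambda\wtilde f$, and convert the factor $|1-\phi_{\lambda+i\gamma_q\rho}(a_t)|$ into $\min\{1,(\lambda/r)^2\}$ via Lemmas~\ref{BP-lemma-1}--\ref{BP-lemma-2}, with Lemma~\ref{lem-Lorentz} handling the commutation of the $z$-average with the Lorentz norm in part~(b). The only cosmetic differences are that in (a) you integrate in $z$ and then apply Jensen, whereas the paper's ``similar arguments'' presumably take a supremum in $t$ first (both work for $L^{p'}$ norms), and in (b) you invoke the consequence clause of Lemma~\ref{BP-lemma-2} directly on $|1-\phi|$, while the paper passes through the intermediate $|1-j_\alpha|$ before applying Lemma~\ref{lem-Lorentz}; the logic is the same either way.
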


\begin{proof}

We shall prove only (b).
(b) We apply  Theorem \ref{H-Y} (b) on the function $M_tf-f\in L^{p,s}(S)$ and subsequently use Lemma \ref{BP-lemma-2}, Lemma \ref{BP-lemma-1} and Lemma \ref{lem-Lorentz} to get the result through the following steps.

\begin{eqnarray*}
&&\left\|\,|\phi_{\cdot+i\gamma_q\rho}(a_t)-1|\left(\int_N|\wtilde{f}
(\mathbf{\cdot}+i\gamma_q\rho,n)|^{q}dn\right)^{1/q}\right\|^\ast_{\alpha, s} \le C_{p,q,r,s}\|M_tf-f\|^\ast_{
p,s}.\end{eqnarray*}
Form this we get
\begin{eqnarray*}
&& \sup_{0\le t\le 1/r}\left\|\,|\phi_{\mathbf{\cdot}+i\gamma_q\rho}(a_t)-1|\left(\int_N|\wtilde{f}(\mathbf\cdot+i\gamma_q\rho,n)|^{q}dn
\right)^{1/q}\right\|^\ast_{\alpha, s}    \le C_{p,q,r,s}\Omega_{p,s}[f]\left(\frac 1r\right).
\end{eqnarray*}
This implies
\begin{eqnarray*}
&& \sup_{0\le z\le 1}\left\|\,|1-j_\alpha(\cdot \frac zr)|\left(\int_N|\wtilde{f}(\mathbf{\cdot}+i\gamma_q\rho,n)|^{q}dn\right)^{1/q}\right\|^\ast_{\alpha, s}    \le C_{p,q,r,s}\Omega_{p,s}[f]\left(\frac 1r\right).
\end{eqnarray*} From this using Lemma \ref{lem-Lorentz} we get
\begin{eqnarray*}
&& \left\|\left(\int_0^1|1-j_\alpha(\cdot \frac zr)|dz\right)\left(\int_N|\wtilde{f}(\mathbf{\cdot}+i\gamma_q\rho,n)|^{q}dn\right)^{1/q}\right\|^\ast_{\alpha, s}    \le C_{p,q,r,s}\Omega_{p,s}[f]\left(\frac 1r\right)
\end{eqnarray*} which implies
\begin{eqnarray*}
&& \left\|\min\left\{1,\left(\frac \cdot r\right)^2\right\}\left(\int_N|\wtilde{f}(\mathbf{\cdot}+i\gamma_q\rho,n)|^{q}dn\right)^{1/q}\right\|^\ast_{\alpha, s}    \le C_{p,q,r,s}\Omega_{p,s}[f]\left(\frac 1r\right).
\end{eqnarray*}
Through similar arguments we can prove (a) applying Theorem \ref{H-Y} (a) on the function $M_tf-f$
\end{proof}

\section{Appendix}
In this section we consider the noncompact riemannian symmetric spaces $X$ of rank one. Most of the notations are standard and can be found in \cite{BP}. It is not difficult to see that all the theorems proved for Damek-Ricci spaces will have analogue for symmetric spaces where $N$ will be replaced by $K$ and the Fourier transform defined in Section 2 will be substituted by the usual Helgason Fourier transform. As $K$ is compact and hence a finite measure space, some of the statements will look simpler here, e.g. $P_1(n)$ will be substituted by $1$. We shall omit these results here except for  one additional result for symmetric spaces (Theorem \ref{growth-by-L-R}). We begin with a proof of the norm estimate of $M_t$.

For a function $f$ on $G/K$, let $M_t fx)=\int_Kf(xka_t)dk$. Then  $M_tf$ is a right $K$-invariant function and hence a function on $G/K$. We will see below that $M_t$ is a bounded operator from $L^p(G/K)$ to $L^p(G/K)$ for every $p\ge 1$ and $\|M_t\|_{op}\le e^{-(2\rho/p')t}$ or $e^{-(2\rho/p)t}$ depending on $p\le 2$ or $p>2$. Here $\|\cdot\|_{op}$ denotes the operator norm.
\begin{proposition}
$M_t$ is strong type $(p,p)$ and $\|M_tf\|_p\le \|f\|_p\phi_{i\gamma_p\rho}(a_t).$
\end{proposition}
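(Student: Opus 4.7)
The plan is to adapt the Damek--Ricci proof from Section~4 to the symmetric space setting. The essential obstacle is that $G$ is unimodular: a naive Minkowski estimate in $L^p(G)$ gives only the trivial bound $\|f\|_p$, because right translation by $ka_t$ preserves the $L^p(G)$-norm. The remedy is to transfer the computation to the non-unimodular solvable group $S=NA$ via the Iwasawa identification $G/K\cong S$; there the modular function $\Delta_S(y)=e^{-2\rho A(y)}$ produces the required exponential factor.

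Using $G=NAK$, I will identify $L^p(G/K)$ with $L^p(S)$ (with the left Haar measure on $S$). For each $k\in K$, Iwasawa-decompose $ka_t=n(k,t)\,a(k,t)\,\kappa(k,t)$ and set $P(k,t)=n(k,t)a(k,t)\in S$. Right-$K$-invariance of $f$ yields $f(xka_t)=f(xP(k,t))$ for $x\in S$, with the product computed in $S$, so
\[
 M_tf(x)=\int_K f(xP(k,t))\,dk.
\]

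Next I apply Minkowski's integral inequality in $L^p(S)$ and use property~(b) of Section~2, which gives $\|R_yf\|_{L^p(S)}=e^{2\rho A(y)/p}\|f\|_p$ for every $y\in S$. Since $A(P(k,t))$ equals the Iwasawa projection $H(ka_t)$, this produces
\[
 \|M_tf\|_p \le \|f\|_p\int_K e^{2\rho H(ka_t)/p}\,dk.
\]

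To conclude I will recognise the last $K$-integral as a Harish--Chandra integral representation of the elementary spherical function: choosing the exponent parameter so that the coefficient of $H(ka_t)$ equals $2\rho/p=\rho(1+\gamma_p)$, and using $\phi_\lambda=\phi_{-\lambda}$ together with $\phi_\lambda(a_t)=\phi_\lambda(a_{-t})$, identifies the integral with $\phi_{i\gamma_p\rho}(a_t)$. This is the direct symmetric-space analogue of the identity $R(e^{2\rho A/p})(t)=\phi_{i\gamma_p\rho}(a_t)$ invoked in Section~4. The main bookkeeping hazard is aligning the paper's $A$-cocycle on $S$ with the Iwasawa projection $H$ on $G$ and with Harish--Chandra's sign convention so that the parameter $\lambda=i\gamma_p\rho$ emerges correctly; once these conventions are matched, the bound $\|M_tf\|_p\le\phi_{i\gamma_p\rho}(a_t)\|f\|_p$ is immediate.
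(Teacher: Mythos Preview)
Your proposal is correct and is essentially the paper's own argument: both identify $L^p(G/K)$ with $L^p(NA)$, Iwasawa-decompose $ka_t=n_1a_rk_1$, apply Minkowski over $K$, and then exploit non-unimodularity of $NA$ (you quote property~(b) of Section~2, the paper performs the substitution $n_2a_sn_1a_r=n_2n_3a_{s+r}$ explicitly) to produce the factor $e^{2\rho r/p}$, after which $\int_K e^{2\rho H(ka_t)/p}\,dk=\phi_{i\gamma_p\rho}(a_t)$ via Harish--Chandra's integral formula. The only difference is packaging; your remark on matching the $A$/$H$ and sign conventions covers the one point that needs care.
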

\begin{proof}
If $x=nak$ then $M_tf(x)=M_tf(na)$. Thus $\int_G|M_tf(x)|^pdx=\int_{N\times \R}|M_tf(n_2a_s)|^p e^{-2\rho s}dn_2ds$.
Therefore \begin{equation}\label{comeback-1-G/K}
\|M_tf\|_p\le \int_K\left(\int_X|f(xka_t)|^pdx\right)^{1/p}dk= \int_K\left(\int_{N\times\R}|f(n_2a_ska_t)|^pe^{-2\rho s}dn_2ds\right)^{1/p}dk.
\end{equation}
For the inside integral we put $ka_t=n_1a_rk_1$. (Then $H(a_t^{-1}k^{-1})=-r$.)
\begin{eqnarray*}
&&\int_{N\times \R}|f(n_2a_ska_t)|^pe^{-2\rho s}dn_2ds\\
&=&\int_{N\times \R}|f(n_2a_sn_1a_r)|^pe^{-2\rho s}dn_2ds\\
&=&\int_{N\times \R}|f(n_2n_3a_{s+r})|^pe^{-2\rho s}dn_2ds \text{ where } n_3=a_sn_1a_{-s}\\
&=&\int_{N\times \R}|f(na_{s})|^pe^{-2\rho s}e^{2\rho r}dnds\\
&=&\|f\|_p^p\, e^{2\rho r}=\|f\|_p^p\, e^{-2\rho H(a_t^{-1}k^{-1})}.
\end{eqnarray*}
We put this back in (\ref{comeback-1-G/K}) to get $\|M_tf\|_p\le \|f\|_p\,\phi_{i\gamma_p\rho}(a_t).$
\end{proof}
 The proposition above  has the following interesting corollary.

\begin{corollary} For any $p\in [1,2)$ if $\lambda\in S_p^\circ$ then $|\phi_\lambda(a_t)|\le \phi_{i\gamma_p\rho}(a_t)$.
\end{corollary}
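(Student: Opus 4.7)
The plan is to test the $L^p$-boundedness of $M_t$ supplied by the preceding proposition against the spherical functions themselves, using the fact that spherical functions are eigenfunctions of $M_t$. Specifically, the functional equation
\[
\int_K \phi_\lambda(xka_t)\,dk = \phi_\lambda(x)\phi_\lambda(a_t)
\]
(valid for all $\lambda \in \mathbb{C}$) gives $M_t\phi_\lambda = \phi_\lambda(a_t)\phi_\lambda$ pointwise. If $\phi_\lambda$ belongs to some Lebesgue space on which the operator norm of $M_t$ is controlled by the proposition, one can take norms on both sides of this identity and read off a pointwise bound on $|\phi_\lambda(a_t)|$.

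To make this work for $\lambda \in S_p^\circ$ with $p \in [1,2)$, the natural choice is the \emph{conjugate} exponent $p' > 2$. Here $\phi_\lambda \notin L^p$ for $p \le 2$, but a direct asymptotic computation shows that $\phi_\lambda \in L^{p'}(G/K)$ whenever $|\Im\lambda| < \gamma_p\rho$. Indeed, from the Jacobi-function asymptotic $\phi_\lambda(a_t) \sim c(-\lambda)e^{(-i\lambda-\rho)t}$ recalled in Section 2 (for $\Im\lambda > 0$), together with the radial measure $\sim e^{2\rho t}\,dt$ at infinity, integrability of $|\phi_\lambda|^{p'}$ reduces to $(\Im\lambda - \rho)p' + 2\rho < 0$, i.e. $\Im\lambda < \rho(1 - 2/p') = \gamma_p\rho$. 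Thus $\|\phi_\lambda\|_{p'}$ is finite and strictly positive for $\lambda \in S_p^\circ$.

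Applying the previous proposition with exponent $p'$ to the identity $M_t\phi_\lambda = \phi_\lambda(a_t)\phi_\lambda$ gives
\[
|\phi_\lambda(a_t)|\,\|\phi_\lambda\|_{p'} = \|M_t\phi_\lambda\|_{p'} \le \phi_{i\gamma_{p'}\rho}(a_t)\,\|\phi_\lambda\|_{p'}.
\]
Since $\gamma_{p'} = -\gamma_p$ and $\phi_{-\mu} = \phi_\mu$, the prefactor on the right equals $\phi_{i\gamma_p\rho}(a_t)$, and dividing by the finite positive constant $\|\phi_\lambda\|_{p'}$ gives the claim.

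The only nontrivial step is the integrability $\phi_\lambda \in L^{p'}(G/K)$, which is the rank-one Kostant/Helgason--Johnson criterion; in our setting it follows directly from the Jacobi asymptotics, so no external input beyond what is already recalled in Section 2 is required.
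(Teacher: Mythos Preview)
Your proof is correct and follows essentially the same route as the paper: both use the eigenfunction identity $M_t\phi_\lambda=\phi_\lambda(a_t)\phi_\lambda$, the membership $\phi_\lambda\in L^{p'}(G/K)$ for $\lambda\in S_p^\circ$, and then apply the preceding proposition at exponent $p'$ together with $\phi_{i\gamma_{p'}\rho}=\phi_{i\gamma_p\rho}$. The only difference is that the paper simply asserts $\phi_\lambda\in L^{p'}$, whereas you justify it via the Jacobi asymptotics (which, incidentally, are not explicitly recalled in Section~2 as you claim, but the fact is standard).
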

Note that there is no constant in the inequality.
\begin{proof}
We note that $M_t\phi_\lambda(x)=\int_K\phi_\lambda(xka_t)dk=\phi_\lambda(x)\phi_\lambda(a_t)$.
For $p\in [1,2)$, we take $\lambda\in \mathcal S_p^\circ$.  Then $\phi_\lambda\in L^{p'}(S)$. Using the Proposition above we see that,
$\|M_t\phi_\lambda\|_{p'}\le \phi_{i\gamma_p}(a_t)\|\phi_\lambda\|_{p'}$ and hence
$|\phi_\lambda(a_t)|\|\phi_\lambda\|_{p'}\le \phi_{i\gamma_p}(a_t)\|\phi_\lambda\|_{p'}$. Thus
$|\phi_\lambda(a_t)|\le \phi_{i\gamma_p}(a_t)$ for all $\lambda\in S_p^\circ$.
\end{proof}

On rank one symmetric space $X=G/K$ by  \cite[Lemma 1]{Lo-Ry} we have the following additional result.
\begin{theorem}\label{growth-by-L-R} If $1<p<2$, then
 $$\sup_\lambda\left[\min\left\{1, \left(\frac{\lambda}r\right)^2\right\}\left(\int_K|\wtilde{f}(\lambda+i\gamma_p\rho,k)|^pdk\right)^{1/p}\right]\le C_p\Omega_{p,1}[f]\left(\frac 1r\right)$$ and

  $$\sup_\lambda\left[\min\left\{1, \left(\frac{\lambda}r\right)^2\right\}\left(\int_K|\wtilde{f}(\lambda-i\gamma_p\rho,k)|^{p'}dk\right)^{1/p'}\right]\le C_p\Omega_{p,1}[f]\left(\frac 1r\right).$$

\end{theorem}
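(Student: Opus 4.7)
The plan is to transplant the argument of Theorem \ref{bray-thm-restriction} to the symmetric space, using a sharper restriction estimate on the boundary of $S_p$. The essential extra input, supplied by [Lo-Ry, Lemma 1], is the borderline inequality
\[
\|\wtilde g(\lambda+i\gamma_p\rho,\cdot)\|_{L^p(K)}\le C_p\|g\|^\ast_{p,1},\qquad
\|\wtilde g(\lambda-i\gamma_p\rho,\cdot)\|_{L^{p'}(K)}\le C_p\|g\|^\ast_{p,1},
\]
for $g\in L^{p,1}(X)$. These estimates complement Theorem \ref{restriction-thm-1} at the endpoints $q=p$ and $q=p'$, weakening the input from $L^1$ (or $L^{p,\infty}$) to $L^{p,1}$. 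I would apply them to $g=M_tf-f$, which lies in $L^{p,1}(X)$ with $\|g\|^\ast_{p,1}\le\Omega_{p,1}[f](1/r)$ for every $t\in(0,1/r]$.

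Concretely, the symmetric-space analogue of Proposition 4.3 gives
\[
(M_tf-f)\wtilde{\,\,}(\lambda,k)=\wtilde f(\lambda,k)\bigl(\phi_\lambda(a_t)-1\bigr),
\]
so inserting $g=M_tf-f$ into the first Lo-Ry estimate at the shifted spectral parameter $\lambda+i\gamma_p\rho$ yields
\[
\bigl|1-\phi_{\lambda+i\gamma_p\rho}(a_t)\bigr|\,\left(\int_K|\wtilde f(\lambda+i\gamma_p\rho,k)|^p\,dk\right)^{1/p}\le C_p\,\Omega_{p,1}[f]\!\left(\tfrac{1}{r}\right),
\]
uniformly in $\lambda\in\R$ and $t\in(0,1/r]$. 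Setting $t=z/r$ with $z\in[0,1]$ and taking the supremum in $z$, Lemma \ref{BP-lemma-2} (with $\eta=\gamma_p\rho$, admissible because $|\gamma_p\rho|<\rho$ for $p>1$) converts $|1-\phi_{\lambda+i\gamma_p\rho}(a_{z/r})|$ into a positive multiple of $|1-j_\alpha(\lambda z/r)|$, after which Lemma \ref{BP-lemma-1} bounds $\sup_{z\in[0,1]}|1-j_\alpha(\lambda z/r)|$ from below by $C_p\min\{1,(\lambda/r)^2\}$. Taking $\sup_\lambda$ on the left-hand side produces the first inequality. The second inequality is the same argument verbatim with $\gamma_p$ replaced by $-\gamma_p=\gamma_{p'}$, with $L^p(K)$ replaced by $L^{p'}(K)$, and with the first Lo-Ry bound replaced by the second.

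The only genuine obstacle is the borderline restriction estimate itself, which is why the appeal to [Lo-Ry] is unavoidable; Theorem \ref{restriction-thm-1} stops short of the endpoint $q=p$ and only handles the strictly larger Lorentz class $L^{p,\infty}$. A minor technical point to verify is that $M_tf-f\in L^{p,1}(X)$ with the required control on its Lorentz norm, which follows from the Lorentz boundedness of $M_t$ recorded after Proposition 4.1 applied in the symmetric-space setting of Section 6, together with the $L^{p,1}$-continuity in $t$. Everything else is routine bookkeeping.
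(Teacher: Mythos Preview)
Your proposal is correct and matches the paper's approach: the paper does not write out a proof of this theorem but simply points to \cite[Lemma 1]{Lo-Ry} as the extra ingredient, with the understanding that one then runs the same argument template as in Theorem \ref{bray-thm-restriction} (apply the estimate to $M_tf-f$, use Proposition 4.3, then Lemmas \ref{BP-lemma-2} and \ref{BP-lemma-1}). Your identification of the Lohou\'e--Rychener endpoint restriction inequalities on $L^{p,1}$ as precisely what is needed at $q=p$ and $q=p'$ is exactly the point.
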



\bibliographystyle{amsplain}

\begin{thebibliography}{amsplain}
\bibitem{ADY} Anker,
J-P.; Damek, E.; Yacoub, C. {\em Spherical analysis on harmonic
$AN$ groups}. Ann. Scuola Norm. Sup. Pisa Cl. Sci. (4) {\bf 23} (1996),
no. 4, 643--679 (1997). MR1469569 (99a:22014)
\bibitem{ACB} Astengo, F.; Camporesi, R.; Di Blasio, B. {\em The Helgason Fourier transform
on a class of nonsymmetric harmonic spaces}. Bull. Austral. Math.
Soc. {\bf 55} (1997), no. 3, 405--424. MR1456271 (98j:22008)
\bibitem{BP}
Bray, ~W. ~O.; Pinsky, ~M. ~A.{\em
Growth properties of Fourier transforms via moduli of continuity}.
J. Funct. Anal. 255 (2008), no. 9, 2265--2285. MR2473257
\bibitem{Cow-Annals}
 Cowling, ~M.  {\em The Kunze-Stein phenomenon}. Ann. Math. (2) 107 (1978), no. 2, 209--234.
 MR0507240 (58 \#22398)
\bibitem{Cow-Herz}
 Cowling, ~M.  {\em Herz's ``principe de majoration'' and the Kunze-Stein phenomenon}. Harmonic analysis and number theory, 73--88, CMS Conf. Proc., 21, Amer. Math. Soc., Providence, RI, 1997. MR1472779 (98k:22040)

\bibitem{CDKR1} {Cowling, ~M.; Dooley, ~A.; Korányi, ~A.: Ricci, ~F.} $H$-type groups and Iwasawa
decompositions. Adv. Math. {\bf 87} (1991), no. 1, 1--41. MR1102963
(92e:22017)
\bibitem{EKT}
Eguchi, ~M.; Koizumi, ~S.; Tanaka, ~S {\em A Hausdorff-Young
inequality for the Fourier transform on Riemannian symmetric
spaces}. Hiroshima Math. J. {\bf 17} (1987), no. 1, 67--77. MR0886982
(88h:22015)
\bibitem{EKu1}
Eguchi, ~M.; Kumahara, ~K. {\em An $L^{p}$ Fourier analysis on
symmetric spaces}. J. Funct. Anal. {\bf 47} (1982), no. 2, 230--246.
MR0664337 (84e:43010)
\bibitem{Folland}
Folland, ~G. ~B. {\em A course in abstract harmonic analysis}.
 CRC Press, Boca Raton, FL, 1995.
\bibitem{Graf} Grafakos, ~L. Classical and Modern Fourier Analysis. Pearson Education, Inc. New Jersy 2004.
\bibitem{Helga1}
Helgason, ~S. {\em Geometric analysis on symmetric spaces}.
Mathematical Surveys and Monographs, 39. Amer. Math. Soc.,
Providence, RI, 1994. MR1280714 (96h:43009)
\bibitem{Lo-Ry}  Lohoué, ~N.; Rychener, ~Th. {\em Some function spaces on symmetric spaces related to convolution operators}.
J. Funct. Anal. {\bf 55} (1984), no. 2, 200--219. MR0733916 (85d:22024)
\bibitem{MRSS}  Mohanty, ~P.; Ray, ~S. ~K.; Sarkar, ~R. ~P.; Sitaram, ~A.
{\em The Helgason-Fourier transform for symmetric spaces. II}. J.
Lie Theory {\bf 14} (2004), no. 1, 227--242. MR2040178 (2005b:43005)
\bibitem{Ray-Sarkar} Ray, ~S. ~K.; Sarkar, ~R. ~P. {Fourier and Radon transform on harmonic $NA$ groups}. Trans. AMS, to appear.
\bibitem{S-S}
 Sarkar, ~R. ~P.; Sitaram, ~A. {\em The Helgason Fourier transform for symmetric spaces}. A tribute to C. S. Seshadri, 467--473, Trends Math., Birkh\"{a}user, Basel, 2003
\bibitem{S-W}
Stein, ~E. ~M.; Weiss, ~G. {\em Introduction to Fourier analysis
on Euclidean spaces.} Princeton Mathematical Series, No. 32.
Princeton University Press, Princeton, N.J., 1971. MR0304972 (46
\#4102)
\end{thebibliography}

\end{document}